\newcommand{\alert}[1]{\textbf{\color{red}
[[[#1]]]}\marginpar{\textbf{\color{red}**}}\typeout{ALERT:
\the\inputlineno: #1}}
\newcommand{\ifsodaelse}[2]{\ifthenelse{\isundefined{\SODAF}}{#2}{#1}}
\renewcommand{\epsilon}{\varepsilon}
\newcommand\remove[1]{}
\renewcommand{\Pr}{\mathbb{P}}
\newcommand{\HH}{\mathbb{H}}
\newcommand{\rnote}[1]{}
\newcommand{\jnote}[1]{}
\newcommand{\1}{\mathbf{1}}
\newcommand{\e}{\varepsilon}
\newcommand{\R}{\mathbb{R}}
\newcommand{\N}{\mathbb{N}}
\newcommand{\vol}{\mathrm{vol}}
\newcommand{\C}{\mathbb{C}}
\DeclareMathOperator{\diam}{diam}
\newtheorem{theorem}{Theorem}[section]
\newtheorem{lemma}[theorem]{Lemma}
\newtheorem{conjecture}{Conjecture}
\newtheorem{question}[conjecture]{Question}
\newcommand{\eqdef}{\stackrel{\mathrm{def}}{=}}
\date{}
\renewcommand{\le}{\leqslant}
\renewcommand{\ge}{\geqslant}
\renewcommand{\leq}{\leqslant}
\renewcommand{\setminus}{\smallsetminus}
\newcommand\Z{{{\mathbb Z}}}
\title{Assouad's theorem with dimension independent of the snowflaking}
\thanks{A.~N. was supported in part by NSF grant CCF-0635078, BSF
grant 2006009, and the Packard Foundation. O.~N. was supported in
part by NSF grant CCF-0635078.}
\author{Assaf Naor}
\address{Courant Institute, New York University, New York NY 10012}
\email{naor@cims.nyu.edu}
\author{Ofer Neiman}
\address{Computer Science Department, Princeton University, Princeton NJ 08544-2087}
\email{oneiman@princeton.edu}
\date{}
\begin{document}
\maketitle
\begin{abstract}
It is shown that for every $K>0$ and $\e\in (0,1/2)$ there exist $N=N(K)\in \N$ and $D=D(K,\e)\in (1,\infty)$ with the following properties. For every separable metric space $(X,d)$ with doubling constant at most $K$, the metric space $(X,d^{1-\e})$ admits a bi-Lipschitz embedding into $\R^N$ with distortion at most $D$. The classical Assouad embedding theorem makes the same assertion, but with  $N\to \infty$ as $\e\to 0$.
\end{abstract}

%\tableofcontents

\section{Introduction}\label{sec:intro}

In this paper all metric spaces are assumed to be separable and
contain at least two points. Balls in metric spaces are always
closed balls, i.e.,  for a metric space $(X,d)$, $x\in X$ and $r\ge
0$, we denote $B(x,r)= \{y\in X:\ d(x,y)\le r\}$. A metric space
$(X,d)$ has doubling constant $K\in (1,\infty)$ if every ball in $X$
can be covered by at most $K$ balls of half its radius, i.e., for
every $x\in X$ and $r>0$ there exist $A\subseteq X$ with $|A|\le K$
such that $B(x,r)\subseteq \bigcup_{y\in A} B(y,r/2)$. Note that
since $X$ contains at least two points, necessarily $K\ge 2$. $(X,d)$ is said to be a doubling metric space if it has  doubling constant $K$ for some $K\in (1,\infty)$.

A metric space $(X,d)$ embeds into a normed space $(Y,\|\cdot\|)$
with distortion $D\in [1,\infty]$ if there exists $f:X\to Y$ such
that for all $x,y\in X$ we have $d(x,y)\le \|f(x)-f(y)\|\le
Dd(x,y)$. When $X$ embeds into $Y$ with finite distortion we say
that $X$ admits a bi-Lipschitz embedding into $Y$. The infimum over
those $D\ge 1$ for which $X$ embeds into $Y$ is denoted $c_Y(X)$.
When $Y=\ell_2$ is infinite dimensional Hilbert space, we write
$c_Y(X)=c_2(X)$; this parameter is known in the literature as the {
Euclidean distortion of $X$}. In what follows, when we refer to the
space $\R^N$ we always assume that it is equipped with the standard
Euclidean metric. A standard argument (see,
e.g.,~\cite[Lem.~4.9]{Ass83}) shows
 that if $Y$ is either $\ell_2$ or
$\R^N$, we have
\begin{equation}\label{eq:reduce to finite subsets}
c_Y(X)=\sup\{c_Y(Z):\ Z\subseteq X\ \wedge\  |Z|<\infty\}.
\end{equation}

If $(X,d)$ is a metric space and $\alpha\in (0,1]$ then $(X,d^\alpha)$ is also a metric space, known as the $\alpha$-snowflake of $X$.

A major open problem of embedding theory is  the  bi-Lipschitz
embeddability problem in $\R^N$. This problem asks for an intrinsic
characterization of those separable metric spaces $(X,d)$ that admit
a bi-Lipschitz embedding into  $\R^N$ for some $N\in \N$. For a
discussion of this important question, see for example the works of
Semmes~\cite{Sem99}, Lang-Plaut~\cite{LP01} and
Heinonen~\cite{Hei03}. An obvious restriction on a metric space
$(X,d)$ that admits a bi-Lipschitz embedding into $\R^N$ is that it
must be {doubling}. In this context, Assouad discovered
in~\cite{Ass83} the following fundamental embedding theorem (see
also Heinonen's book~\cite{Hei01} for a nice exposition of Assouad's
theorem).

\begin{theorem}[Assouad's embedding theorem]
For every $\e\in (0,1)$ and $K>0$, there exist $N=N(K,\e)\in \N$ and
$D=D(K,\e)\in (1,\infty)$ such that for every separable metric space
$(X,d)$ with doubling constant $K$, the metric space
$\left(X,d^{1-\e}\right)$ admits a bi-Lipschitz embedding into
$\R^N$ with distortion at most $D$.
\end{theorem}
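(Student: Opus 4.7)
The plan is to build the embedding scale-by-scale using a gluing of local bump functions, with coordinates indexed by pairs (scale-class, color), mimicking the standard proof.

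Step 1 (nets and coloring). For every $k\in \Z$, fix a maximal $2^{-k}$-separated subset $N_k\subseteq X$; this is automatically a $2^{-k}$-net. The doubling hypothesis implies that for every $x\in X$ the set $N_k\cap B(x,C\cdot 2^{-k})$ has cardinality at most $K^{O(\log C)}$. Consequently one can greedily color $N_k$ with $\chi=\chi(K)$ colors so that any two points of the same color class $C_{k,1},\ldots,C_{k,\chi}$ are at distance at least, say, $100\cdot 2^{-k}$.

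Step 2 (bump functions and coordinates). Define the Lipschitz cutoff
\[
g_{k,c}(x)\;=\;\max\!\bigl\{0,\;2^{-k}-d(x,C_{k,c})\bigr\},\qquad c\in\{1,\dots,\chi\}.
\]
Group the scales into residue classes modulo $L=\lceil C/\e\rceil$ for a sufficiently large universal constant $C$; this ensures that within one residue class the scales $2^{-k}$ are geometrically so far apart that the supports of neighboring $g_{k,c}$'s effectively don't overlap. For each pair $(j,c)$ with $j\in\{0,\dots,L-1\}$, $c\in\{1,\dots,\chi\}$, set
\[
F_{j,c}(x)\;=\;\sum_{k\equiv j\,(\mathrm{mod}\,L)} 2^{k\e}\,g_{k,c}(x),
\]
and let $F:X\to \R^N$ with $N=L\chi=O(\chi(K)/\e)$ be the vector $(F_{j,c})_{j,c}$, divided by a normalizing constant $D'=D'(K,\e)$ to be chosen. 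By \eqref{eq:reduce to finite subsets} it suffices to do this construction on arbitrary finite subsets, which finesses convergence issues.

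Step 3 (Lipschitz upper bound). Fix $x\neq y$ and let $k_0\in\Z$ satisfy $2^{-k_0-1}<d(x,y)\le 2^{-k_0}$. Split the contribution of each $F_{j,c}$ into ``fine'' scales $k>k_0$ and ``coarse'' scales $k\le k_0$. For fine scales use the $1$-Lipschitz bound $|g_{k,c}(x)-g_{k,c}(y)|\le d(x,y)$ combined with the trivial bound $|g_{k,c}|\le 2^{-k}$, whichever is smaller; for coarse scales use that both $x$ and $y$ see the color-$c$ net at essentially the same distance, so $|g_{k,c}(x)-g_{k,c}(y)|\le d(x,y)$. Summing the geometric series $\sum 2^{k\e}\cdot\min(2^{-k},d(x,y))$ yields $\|F(x)-F(y)\|\lesssim \chi L\cdot d(x,y)^{1-\e}$.

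Step 4 (lower bound, main obstacle). The real work is to guarantee $\|F(x)-F(y)\|\gtrsim d(x,y)^{1-\e}$. Choose a net point $z\in N_{k_0+O(1)}$ that is close to $x$ but such that the ball of radius $\approx 2^{-k_0}$ around $z$ does not contain $y$ (such a $z$ exists because $N_{k_0+O(1)}$ is $2^{-k_0-O(1)}$-dense). Let $c$ be the color of $z$. Because same-colored points of $N_{k_0+O(1)}$ are $100\cdot 2^{-k_0}$-separated, $z$ is the unique color-$c$ net point within distance $2^{-k_0-O(1)}$ of $x$, so $g_{k_0+O(1),c}(x)\asymp 2^{-k_0}$ while $g_{k_0+O(1),c}(y)=0$. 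Provided that neighboring scales in the same residue class (distance $L$ apart in $k$) cannot cancel this, the $(j,c)$-coordinate of $F(x)-F(y)$ is $\gtrsim 2^{k_0\e}\cdot 2^{-k_0}\asymp d(x,y)^{1-\e}$. The choice of $L\gtrsim 1/\e$ is exactly what makes the other scales in this residue class negligible compared to the dominant one, and it is this necessity to space out scales that forces the dimension $N$ to blow up with $\e$ in the classical argument.

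Step 5 (conclusion). Dividing by the constant from Step 3 yields an embedding of $(X,d^{1-\e})$ into $\R^N$ with distortion $D=D(K,\e)$, completing the proof.
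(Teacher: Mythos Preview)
Your argument is the classical Assouad construction, and indeed the paper does not give its own proof of this statement: Assouad's theorem is quoted as background, with references to~\cite{Ass83} and~\cite{Hei01}. The paper's own work is the proof of Theorem~\ref{thm:main}, which strengthens the conclusion to $N=N(K)$ independent of~$\e$, and uses a genuinely different mechanism: random padded partitions (Lemma~\ref{lem:pad}) with independent uniform labels $U_i^k$ on clusters, together with the Lov\'asz Local Lemma (Lemma~\ref{lem:local}) to force the lower bound to hold simultaneously for all net pairs in only $N\lesssim\log K$ coordinates. Your Step~4 identifies precisely the obstruction the paper overcomes: in the deterministic coloring scheme, scales within a single coordinate must be spaced $L\asymp 1/\e$ apart so the dominant scale is not cancelled, and this is what forces $N\asymp \chi(K)/\e$. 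The paper replaces the deterministic separation of scales by a probabilistic argument in which the padding probability is high enough (roughly $1-O(\tau)$ with $\tau\asymp(\e/\log K)^\theta$) that a Chernoff bound plus the Local Lemma handle all scales in each coordinate at once.

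Two small technical points about your write-up. First, with $g_{k,c}(x)=\max\{0,2^{-k}-d(x,C_{k,c})\}$ the bump radius equals the net spacing, so the nearest net point $z$ to $x$ only guarantees $g_{k,c}(x)\ge 0$, not $g_{k,c}(x)\asymp 2^{-k}$; you need the bump radius to be a constant multiple larger than the net spacing (and the coloring separation correspondingly larger) for Step~4 to give a definite lower bound. Second, the sum defining $F_{j,c}(x)$ diverges as $k\to-\infty$ even on a finite space, since the term at a coarse scale is $\asymp 2^{-k(1-\e)}$; only the differences $F_{j,c}(x)-F_{j,c}(y)$ converge. Restrict to $2^{-k}\lesssim\diam(X)$ or subtract a basepoint. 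Both issues are routine to fix and do not affect the structure of the argument.
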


Assouad's theorem falls short of solving the bi-Lipschitz
embeddability problem in $\R^N$, since it only achieves an embedding
of the snowflaked metric space $(X,d^{1-\e})$. Nevertheless, as
$\e\to 0$ this metric space becomes closer and closer to the
original metric space $(X,d)$. It is therefore of interest to
investigate the behavior of $N(K,\e)$ and $D(K,\e)$ as $\e\to 0$. It
turns out that necessarily $\lim_{\e\to 0}D(K,\e)=\infty$, due to
the existence of doubling metric spaces that do not admit a
bi-Lipschitz embedding into $\R^N$. The first known such example  is
the Heisenberg group, equipped with the Carnot-Carath\'eodory
metric: Semmes observed in~\cite{Sem96} that its bi-Lipschitz
nonembeddability into $\R^N$ is a consequence of Pansu's
differentiability theorem~\cite{Pan89}. Additional examples of
non-Euclidean doubling spaces were found by Laakso~\cite{Laa02} and
Bourdon-Pajot~\cite{BP99}; see the work of Cheeger~\cite{Che99} for
a unified treatment of these results.

It seems to be inherent to Assouad's embedding method that also
$\lim_{\e\to 0}N(K,\e)= \infty$. Note that if $\e\in (0,1/2)$ then
the metric space $(X,d^{1-\e})$ has doubling constant $K^2$, so there
is no obvious obstruction to $(X,d^{1-\e})$ admitting a bi-Lipschitz
embedding into $\R^N$ for some $N$ that is independent of $\e \in
(0,1/2)$. The issue that in Assouad's theorem $N$ depends on $\e$
and is very large as $\e\to 0$ was noted by many authors; this is
mentioned, for example, in the works of David-Toro~\cite{DT99} and
Semmes~\cite{Sem99} (where much more refined bounds on $N$ are
obtained under additional assumptions). Assouad himself noticed this
issue in~\cite{Ass83}, where he showed that $N$ can be taken to be
independent of $\e\in (0,1/2)$ when $X=\R$ (more generally, Assouad
deals in~\cite{Ass83} with $X=[0,1]^k$). The case of the ``helix
snowflakes" $(\R,|x-y|^{1-\e})$ was studied by Kahane~\cite{Kah81}
and Talagrand~\cite{Tal92}, who investigated the interplay between
the dimension $N$ and the distortion $D$ (Kahane studied only the
case $\e=\frac12$, and obtained sharp results. Talagrand's work
applies to all $\e\in (0,1)$, but is not sharp).

Here we show that in Assouad's theorem one can take $N$ to depend
only on the doubling constant $K$, but not on $\e\in (0,1/2)$.

\begin{theorem}\label{thm:main}
For every $K>0$ there exists $N=N(K)\in \N$, and for every $\e\in
(0,1/2)$ and $K>0$ there exists $D=D(K,\e)\in (1,\infty)$, such that
for every separable metric space $(X,d)$ with doubling constant $K$,
the metric space $\left(X,d^{1-\e}\right)$ admits a bi-Lipschitz
embedding into $\R^N$ with distortion at most $D$.
\end{theorem}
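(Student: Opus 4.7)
The plan is to construct a \emph{randomised} multi-scale embedding $F=(F_1,\ldots,F_N)\colon X\to \R^N$ that aggregates, across all dyadic scales, signed ``distance-to-cell-boundary'' functions for a random padded partition, with $N=N(K)$ controlled via the doubling property and distortion $D=D(K,\e)$ arising from geometric series summation that converges because $\e\in(0,1/2)$. By \eqref{eq:reduce to finite subsets} it suffices to embed finite subsets $S\subseteq X$. Using CKR-type constructions adapted to doubling metrics, for each scale $s\in\Z$ and coordinate $j\in[N]$ one samples an independent random partition $\Pi_s^{(j)}$ of $S$ into cells of diameter $\le 2^{-s}$ satisfying
\[
\Pr\!\left[B(x,\eta\cdot 2^{-s})\subseteq \Pi_s^{(j)}(x)\right]\ge 1-C(K)\eta \qquad (x\in S,\ \eta\in(0,1)),
\]
together with an independent uniform random sign $\sigma_C^{(s,j)}\in\{\pm 1\}$ per cell $C\in\Pi_s^{(j)}$. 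I would then set
\[
F_j(x) \;=\; \sum_{s\in\Z} 2^{s\e}\,\sigma_{\Pi_s^{(j)}(x)}^{(s,j)}\,d\!\left(x,\,S\setminus \Pi_s^{(j)}(x)\right),
\]
a finite sum since $S$ is bounded.

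The upper bound is the easy direction: the summand at scale $s$ is bounded in absolute value by $2^{s\e}\min(d(x,y),2^{-s})$ (using that the distance-to-complement function is $1$-Lipschitz and takes values in $[0,2^{-s}]$). Splitting the sum at the critical scale $s^*$ with $2^{-s^*}\asymp d(x,y)$ and using convergence of both tail geometric series for $\e\in(0,1/2)$ yields $|F_j(x)-F_j(y)|\le C(K,\e)\,d(x,y)^{1-\e}$, hence $\|F(x)-F(y)\|_2 \le \sqrt{N}\,C(K,\e)\,d(x,y)^{1-\e}$.

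For the lower bound, fix $x,y\in S$ with critical scale $s^*$. Conditional on the partitions, $Z_j = F_j(x)-F_j(y)$ is a Rademacher sum whose variance proxy $\sum_s 2^{2s\e}\min(d(x,y),2^{-s})^2$ is $O(d(x,y)^{2(1-\e)})$. Positive-probability separation and padding at the critical scale---namely that with probability $\ge p(K)>0$, $x$ and $y$ lie in different cells of $\Pi_{s^*}^{(j)}$ and $d(x,S\setminus\Pi_{s^*}^{(j)}(x))\ge \eta(K)\cdot 2^{-s^*}$---contribute a coefficient of absolute value $\gtrsim d(x,y)^{1-\e}$ to $Z_j$, giving $\E[Z_j^2]\gtrsim d(x,y)^{2(1-\e)}$. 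Paley-Zygmund, combined with the sub-Gaussian concentration of Rademacher sums, then yields $\Pr[|Z_j|\ge c(K,\e)\,d(x,y)^{1-\e}]\ge p'(K)>0$.

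The main obstacle is converting these per-pair, per-coordinate bounds into a single deterministic embedding with $N$ depending only on $K$---not on $|S|$, the aspect ratio of $S$, or $\e$. A naive union bound over $\binom{|S|}{2}$ pairs forces $N\gtrsim \log|S|$, and for a doubling space $|S|$ is only controlled by the aspect ratio, not by $K$ alone. I would overcome this via a Lovász Local Lemma argument: the ``bad'' event for a pair $(x,y)$ depends essentially only on the random data at scales within a $K$-dependent window around its critical scale, and two pairs with sufficiently distant critical scales have nearly independent bad events; the doubling property bounds by $K^{O(1)}$ the number of pairs ``interacting with'' any given pair. The resulting bounded-degree dependency graph (with degree depending on $K$ alone) should let LLL yield a deterministic configuration with $N=N(K)$ in which no bad event occurs. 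Making this decoupling work with both the window size and the LLL dimension independent of $\e$---so that distant-scale tails of the Rademacher sum contribute only negligible noise---is, in my view, the delicate heart of the argument and the step where the dependence of $N$ on $\e$ in Assouad's original theorem must be eliminated.
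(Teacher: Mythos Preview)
Your architecture---random padded partitions, a multi-scale sum, Lov\'asz Local Lemma to derandomise---is correct, and you have located the crux. But the specific scheme you outline, with \emph{dyadic} scales and \emph{constant} padding probability $p(K)$, cannot close the gap you yourself flag. With scale ratio $1/2$, the coefficients $2^{s\e}\min(d(x,y),2^{-s})$ in your Rademacher sum decay only at rate $2^{-\e}$ on the side $s<s^*$, so on the order of $1/\e$ scales contribute comparably; the bad event for $(x,y)$ genuinely depends on all of them, and no ``$K$-dependent window'' exists. Moreover, your per-pair failure probability is only $e^{-c(K)N}$, which against any $\e$-growing degree forces $N$ to grow with $1/\e$.

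The paper's fix is to make \emph{both} sides of the LLL inequality $\e$-dependent, in a balanced way. The scale ratio is taken to be $\tau=(\e/\log K)^\theta/32$ rather than $1/2$, and the padding radius is shrunk to order $(\tau/\log K)$ times the scale; this drives the padding probability up to $1-O(\tau)$ (Lemma~\ref{lem:pad}), so the per-coordinate failure probability drops to $O(\tau)$ rather than a fixed $1-p'(K)$, and over $N$ coordinates the failure probability becomes $(\e/\log K)^{\theta N/2}$. The LLL degree \emph{does} grow with $1/\e$---it is $K^{O(\log\log K+\log(1/\e))}$ (Lemma~\ref{lem:degree bound}), arising from the $\e$-dependent net mesh $\delta_i$ of~\eqref{eq:def delta i} needed to transfer the lower bound from net pairs to all pairs---but the product $q\cdot d$ is bounded once $N\gtrsim(\log K)/\theta$, independently of $\e$. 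A second essential move is that the paper truncates the sum at the critical scale (the event $L(i,u,v)$ in~\eqref{eq:def L} uses only $\sum_{j\le i}$), which is legitimate precisely because $\tau$ is small enough that the tail $\sum_{j>i}\tau^j\le\tfrac{3}{2}\tau^{i+1}$ is strictly dominated by the scale-$i$ threshold; this truncation, together with the ordered LLL variant of Lemma~\ref{lem:local}, is what makes the conditional-independence step in Lemma~\ref{lem:condisional bound} go through. Your Paley--Zygmund argument on the full untruncated sum gives no access to this mechanism.
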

Our argument yields the bounds $N(K)\lesssim \log K$ and
$D(K,\e)\lesssim \left(\frac{\log K}{\e}\right)^2$. More generally,
for every $\delta\in (0,1]$ our argument yields the bounds
\begin{equation}\label{eq:our bounds}
N(K)\lesssim \frac{\log K}{\delta}\quad\mathrm{and}\quad
D(K,\e)\lesssim  \left(\frac{\log K}{\e}\right)^{1+\delta}.
\end{equation}
Here and in what follows, the symbols $\lesssim, \gtrsim$ indicate
the corresponding inequalities up to an absolute multiplicative
factor.

In the rest of this introduction we will describe some additional
results and question related to the bi-Lipschitz embeddability
problem in $\R^N$.

\subsection{The Lang-Plaut problem and snowflakes of the Heisenberg
group} Despite major efforts by many mathematicians, the
bi-Lipschitz embeddability problem in $\R^N$ remains wide open. A
variety of sufficient intrinsic conditions on a metric space $(X,d)$
are known which ensure that it admits a bi-Lipschitz embedding in
some Euclidean space $\R^N$, but these conditions are far from
necessary.

A necessary condition for a metric space $(X,d)$ to admit a
bi-Lipschitz embedding into some $\R^N$ (in addition to being
doubling) is that it admits a bi-Lipschitz embedding into $\ell_2$,
i.e., its Euclidean distortion satisfies $c_2(X)<\infty$. All the
known examples of doubling metric spaces that do not admit a
bi-Lipschitz embedding into any $\R^N$ actually do not admit a
bi-Lipschitz embedding into infinite dimensional Hilbert space as
well. This led Lang and Plaut~\cite[Question~2.4]{LP01} to ask the
following question.

\begin{question}[Lang-Plaut problem]\label{Q:LP}
Is it necessary and sufficient for a metric space $(X,d)$ to admit a
bi-Lipschitz embedding into some $\R^N$ that it is doubling and it
admits a bi-Lipschitz embedding into Hilbert space? Equivalently,
does every doubling subset of Hilbert space admit a bi-Lipschitz
embedding into some $\R^N$?
\end{question}
By a simple argument (presented in Section~\ref{sec:heisenberg}), the Lang-Plaut problem
can be restated quantitatively as follows. Is it true that for every
$K>0$ there is $N=N(K)\in \N$ and $D=D(K)\in (1,\infty)$ such that
if $X\subseteq \ell_2$ has doubling constant $K$ then
$c_{\R^N}(X)\le D$?

One might argue whether or not a positive answer to the Lang-Plaut
problem would resolve the bi-Lipschitz embedding problem into
$\R^N$, since it is not obvious that the condition that $X$ admits a
bi-Lipschitz embedding into Hilbert space can be restated in terms of
the intrinsic geometry of $X$. But, it is possible to characterize bi-Lipschitz embeddability into $\ell_2$ in terms of a family of distance inequalities, i.e., intrinsically, without using the word ``embedding". Indeed, as shown by
Linial-London-Rabinovich~\cite{LLR95} (extending the corresponding
classical result of Schoenberg~\cite{Sch38} in the isometric
category), $c_2(X)\le D$ if and only if for all $n\in
\N$, $x_1,\ldots,x_n\in X$ and every $n\times n$ symmetric positive
semidefinite matrix $Q=(q_{ij})$, all of whose rows sum to $0$, the
following inequality holds true:
\begin{equation}\label{eq:LLR}
\sum_{i=1}^n\sum_{j=1}^n \max\{q_{ij},0\} d(x_i,x_j)^2\le
D^2\sum_{i=1}^n \sum_{j=1}^n \max\{-q_{ij},0\} d(x_i,x_j)^2.
\end{equation}
Hence, a positive answer to the Lang-Plaut question would yield a
characterization of bi-Lipschitz embeddability into some $\R^N$ in
terms of the doubling condition, and the family of distance
inequalities~\eqref{eq:LLR}. We believe that this would yield a
satisfactory answer to the bi-Lipschitz embeddability problem in
$\R^N$, though there does not seem to be evidence supporting a
positive answer to the the Lang-Plaut question.

A potential source of doubling subsets of Hilbert space that might
yield a counter-example to the Lang-Plaut problem is Assouad's
theorem itself. When allowing embeddings into infinite dimensional
Hilbert space rather than into $\R^N$, the asymptotics in terms of
$\e$ of $D(K,\e)$ in Assouad's theorem are known~\cite{LNM05} (see
also~\cite{NS10}). Specifically, if $(X,d)$ has doubling constant
$K$ then $c_2(X)\le C(K)/\sqrt{\e}$ for some $C(K)\in (0,\infty)$. This
dependence on $\e$ is sharp up to the value of $C(K)$, as shown
in~\cite[Remark~5.4]{LNM05}.

If $(X,d)$ has doubling  constant $K$ then the space $(X,d^{1-\e})$
has doubling constant bounded uniformly in $\e\in (0,1/2)$, but, in
its $C(K)/\sqrt{\e}$-distortion embedding into $\ell_2$ it might
have an image that is not a doubling subset of $\ell_2$, with
doubling constant independent of $\e$, due to the large distortion.
We therefore ask the following question:
\begin{question}\label{Q:doubling image}
Is it true that for every $K\in (1,\infty)$ there exist $a(K),b(K)\in
(0,\infty)$ with the following property. If $(X,d)$ has doubling
constant $K$ and $\e\in (0,1/2)$ then there exists $f:X\to \ell_2$ such that
$a(K)\sqrt{\e}d(x,y)^{1-\e}\le \|f(x)-f(y)\|\le d(x,y)^{1-\e}$ for all $x,y\in X$,
and $f(X)\subseteq \ell_2$ has doubling constant $b(K)$.
\end{question}
Observe that due to Theorem~\ref{thm:main}, with the explicit bounds stated
in~\eqref{eq:our bounds}, if we replaced in Question~\ref{Q:doubling
image} the term $\sqrt{\e}$ by $\e^{1+\delta}$ for any $\delta\in(0,1]$, then the answer would
be positive, and even the image of the embedding would be finite
dimensional with dimension depending only on $K$ and $\delta$.

In spite of the fact that we don't know the answer to
Question~\ref{Q:doubling image}, we do know that the answer is
positive for the Heisenberg group. For $n\in \N$, the $n$'th
Heisenberg group $\HH_n$ is $\C^n\times \R$, equipped with the
following group product:
\begin{multline*}
(w,s)\cdot (z,t)=\left(w+z,s+t+2\sum_{j=1}^n
\Im\left(w_j\overline{z_j}\right)\right)\\ \forall
w=(w_1,\ldots,w_n),z=(z_1,\ldots,z_n)\in \C^n,\ \forall s,t\in \R.
\end{multline*}
Thus $(0,0)$ is the identity of $\HH_n$ and for $(z,t)\in \HH_n$ we
have $(z,t)^{-1}=(-z,-t)$.

The Koranyi norm on $\HH_n$ is defined for $(z,t)\in \HH_n$ by
$N_0(z,t)= \sqrt[4]{|z|^4+t^2}$,
%\begin{equation}\label{eq:def koranyi}
%N_0(z,t)\eqdef \left(|z|^4+t^2\right)^{1/4},
%\end{equation}
where $|z|^2=\sum_{j=1}^n|z_j|^2$. For $g,h\in \HH_n$ we have
$N_0(gh^{-1})\le N_0(g)+N_0(h)$ (see~\cite{KR85,Cyg81}). Thus
$d_{N_0}(g,h)= N_0(h^{-1}g)$ is a left-invariant metric on $\HH_n$.
One can check that the Lebesgue measure is a Haar measure of
$\HH_n$, and that $(\HH_n, d_{N_0})$ has doubling constant
$e^{O(n)}$.

In Section~\ref{sec:heisenberg} we observe that a result of~\cite{LN06} implies the following statement.

\begin{theorem}\label{thm:heis}
For every $\e\in (0,1/2)$ and for every $n\in \N$, there exists $f_\e:\HH_n\to \ell_2$ satisfying  $\sqrt{\e} d_{N_0}(x,y)^{1-\e}\le \|f_\e(x)-f_\e(y)\|\le d_{N_0}(x,y)^{1-\e}$ for all $x,y\in \HH_n$, and such that $f_\e(\HH_n) $ is a doubling subset of $\ell_2$, with doubling constant $e^{O(n)}$.
\end{theorem}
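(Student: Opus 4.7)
The plan is to invoke the explicit embedding constructed by Lee and Naor in [LN06]. That construction produces, for every $\e \in (0,1/2)$ and every $n \in \N$, a map $f_\e : \HH_n \to \ell_2$ satisfying the distortion inequalities $\sqrt{\e}\, d_{N_0}(x,y)^{1-\e} \le \|f_\e(x)-f_\e(y)\| \le d_{N_0}(x,y)^{1-\e}$ asserted in the theorem. What is to be verified here is that the image of this particular embedding is a doubling subset of $\ell_2$ with doubling constant $e^{O(n)}$ that is independent of $\e$.

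The key structural feature of the [LN06] embedding is its equivariance under the two natural symmetry families of $\HH_n$: the left translations and the one-parameter family of Heisenberg dilations $\delta_s(z,t)=(sz,s^2 t)$. Concretely, one can normalize $f_\e(0)=0$ and find linear isometries $\{U_g\}_{g\in \HH_n}$ and $\{V_s\}_{s>0}$ of $\ell_2$, together with vectors $v_g\in\ell_2$, such that
\begin{equation*}
f_\e(gx)=U_g f_\e(x)+v_g \qquad \text{and} \qquad f_\e(\delta_s x)= s^{1-\e} V_s f_\e(x).
\end{equation*}
The first identity, combined with left-invariance of $d_{N_0}$, makes $(f_\e(\HH_n),\|\cdot-\cdot\|)$ a homogeneous metric space; the second, combined with the identity $d_{N_0}(\delta_s x,\delta_s y)=s\,d_{N_0}(x,y)$, makes it self-similar in $\ell_2$: the intersection of the image with $B_{\ell_2}(0,s^{1-\e})$ is, up to a linear isometry, a scaled copy of its intersection with $B_{\ell_2}(0,1)$.

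Given these symmetries, verifying doubling reduces to a single covering calculation at one fixed scale: cover $f_\e(\HH_n)\cap B_{\ell_2}(0,1)$ by $e^{O(n)}$ $\ell_2$-balls of radius $1/2$. I would decompose $\HH_n$ into the $d_{N_0}$-annular shells $A_k=\{x : 2^{-k}\le d_{N_0}(x,0)<2^{-k+1}\}$; by dilation equivariance, only those shells whose associated image scale $2^{-k(1-\e)}$ is within a constant factor of $1$ can actually meet the unit ball, so only $O(1)$ shells are relevant (independently of $\e$). Each such shell is a bounded-diameter piece of $\HH_n$, and by the $e^{O(n)}$-doubling of $(\HH_n,d_{N_0})$ it can be covered by $e^{O(n)}$ pieces whose $d_{N_0}$-diameter is small enough that, by the upper bound $\|f_\e(x)-f_\e(y)\|\le d_{N_0}(x,y)^{1-\e}$, their $f_\e$-images have $\ell_2$-diameter at most $1/2$. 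Multiplying the per-shell bound by the $O(1)$ number of contributing shells gives the desired $e^{O(n)}$ cover.

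The main obstacle is guaranteeing that $\e$-dependent factors do not enter the doubling constant. A direct distortion-based estimate would show only that the preimage of a unit $\ell_2$-ball has $d_{N_0}$-diameter $\sim \e^{-O(1)}$, leading to an $\e$-dependent covering number $(1/\e)^{O(n)}$. The dilation equivariance is precisely what decouples the covering number from this preimage diameter: it forces the image of each $\HH_n$-scale to sit in a corresponding $\ell_2$-annulus, so only $O(1)$ scales actually contribute to the unit ball and the intrinsic $e^{O(n)}$-doubling of $\HH_n$ takes over from there.
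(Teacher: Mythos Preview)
There is a genuine gap in your argument, precisely at the step you flag as the ``main obstacle.'' Your claim that dilation equivariance forces only $O(1)$ of the $d_{N_0}$-shells $A_k$ to meet the unit $\ell_2$-ball is not correct. Equivariance gives $f_\e(A_k)= 2^{-k(1-\e)} V_{2^{-k}}\, f_\e(A_0)$, but the set $f_\e(A_0)$ itself is only known to lie in the $\ell_2$-annulus $\{\,\sqrt{\e}\le \|v\|\le 2^{1-\e}\,\}$, because the lower distortion bound carries the factor $\sqrt{\e}$; and in fact for the actual embedding of~\cite{LN06} points on the center of $\HH_n$ really have $\|f_\e(x)\|\asymp \sqrt{\e}\,d_{N_0}(x,0)^{1-\e}$. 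Hence $f_\e(A_k)$ can meet $B_{\ell_2}(0,1)$ for all negative $k$ with $2^{-k(1-\e)}\lesssim 1/\sqrt{\e}$, i.e.\ for $\Theta(\log(1/\e))$ values of $k$. Covering each such shell via the upper bound $\|f_\e(x)-f_\e(y)\|\le d_{N_0}(x,y)^{1-\e}$ and the doubling of $(\HH_n,d_{N_0})$ then yields only a $(1/\e)^{O(n)}$ doubling constant, not $e^{O(n)}$. In short, dilation equivariance alone does not ``decouple the covering number from the preimage diameter''; the $\sqrt{\e}$ is exactly what couples them.

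What the paper actually uses from~\cite{LN06} is a stronger structural fact than equivariance: the embedding $f$ is an \emph{isometry} of the metric space $\left(\HH_n,\,d_{M_p}^{p/2}\right)$ into $\ell_2$, where $p=2(1-\e)$ and $M_p$ is an explicit homogeneous norm with $\sqrt{1-p/2}\,N_0\le M_p\le N_0$. The distortion estimate follows from this norm comparison, and the doubling of the image reduces to the doubling of $\left(\HH_n, d_{M_p}^{p/2}\right)$. The latter is then obtained by a one-line Haar-measure computation: since $\delta_\theta$ rescales $d_{M_p}$ by $\theta$ and Lebesgue measure by $\theta^{2n+2}$, one has $\vol(B_p(x,r))=2^{4(n+1)/p}\vol(B_p(x,r/2))$, giving doubling constant at most $2^{8(n+1)/p}\le 2^{16(n+1)}$ uniformly in $\e\in(0,1/2)$. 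The point is that working with $d_{M_p}$-balls (equivalently, $\ell_2$-balls in the image) rather than $d_{N_0}$-balls is what removes the $\sqrt{\e}$, and the volume argument is what lets you count without ever passing back through $d_{N_0}$.
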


We also show in Section~\ref{sec:heisenberg} that
Theorem~\ref{thm:heis} is sharp, even without the requirement that
the image of $\HH_n$ is doubling with constant independent of $\e$:
\begin{equation}\label{eq:lower c_2 heis}
c_2\left(\HH_n,d_{N_0}^{1-\e}\right)\gtrsim \frac{1}{\sqrt{\e}}\quad \forall\ \e\in (0,1/2).
\end{equation}

This raises the following question:

\begin{question}\label{eq:no lowdim}
Is it true that for every fixed $N\in \N$ we have $\lim_{\e\to 0} c_{\R^N}\left(\HH_1,d_{N_0}^{1-\e}\right)\sqrt{\e}=\infty$?
\end{question}
A positive answer to Question~\ref{eq:no lowdim} would imply a
negative answer to the Lang-Plaut problem, since otherwise there
would be $N\in\N$ and $D\in (1,\infty)$ satisfying
$c_{\R^N}(f_\e(\HH_1))\le D$ for all $\e\in (0,1/2)$, where $f_\e$
is the Euclidean embedding of $\left(\HH_1,d_{N_0}^{1-\e}\right)$
from Theorem~\ref{thm:heis}. This would yield the bound
$c_{\R^N}\left(\HH_1,d_{N_0}^{1-\e}\right)\sqrt{\e}\le D$.

\subsection{Previous work and an overview of the proof of Theorem~\ref{thm:main}} The
classical proof of Assouad's theorem~\cite{Ass83,Hei01} yields the
dimension bound $N(K,\e)\le c(K)/\e^{O(1)}$. In~\cite{GKL03}
Gupta-Krauthgamer-Lee announced a similar bound on $N(K,\e)$ with a
much better dependence of $c(K)$ on $K$, yet the same bound in terms
of $\e$ (the proof of this assertion of~\cite{GKL03} hasn't appeared
since the 2003 announcement, and in particular the dependence on
$\e$ was not stated there explicitly, but it seems to us that the
proof technique suggested in~\cite{GKL03} would lead to this bound).
A similar bound follows from the work of Har-Peled and
Mendel~\cite{HM06}, who studied in addition embeddings into
$\ell_\infty^N$, yielding a $1+\delta$ distortion result. The best
previously known bound is due to Abraham-Bartal-Neiman~\cite{ABN},
who proved that $N(K,\e)\le c(K)\log(1/\e)$. In the context of the
Lang-Plaut problem, Gottlieb-Krauthgamer~\cite{GK11}, and
Bartal-Recht-Schulman~\cite{BRS11}, proved that if $X\subseteq
\ell_2$ has doubling constant $K$ then for all $\delta\in (0,1)$ the
$(1-\e)$-snowflake of $X$ embeds with distortion $1+\delta$ into
$\R^{c(K,\delta)/\e^{O(1)}}$; the main point in these works,
however, is to obtain a $1+\delta$ distortion embedding, which is
impossible in the context of general doubling metric spaces that are
not necessarily isometric to a subset of $\ell_2$.

Our proof of Theorem~\ref{thm:main} builds heavily on the method of
Abraham-Bartal-Neiman~\cite{ABN}. In essence, our proof should be
viewed as an optimization of the argument of~\cite{ABN} which uses
degrees of freedom that were available in the construction
of~\cite{ABN} but were not previously exploited. This requires
subtle changes in the proof of~\cite{ABN}, and in particular we were
surprised that such changes can lead to a complete removal of the
dependence on $\e$ of the dimension $N$  in Assouad's theorem.
Though somewhat delicate, these changes are of a technical nature,
and the key conceptual ideas can all be found in~\cite{ABN}.

 The proof of Theorem~\ref{thm:main} is based on a construction of a distribution over random embeddings, arising
 from a certain family of random multi-scale partitions of the metric space $(X,d)$. At every possible distance scale we provide a mapping to $\R$ which is essentially the truncated distance to the ``boundary" of the random partition. We then combine all the possible scales into a single embedding into $\R$, using an idea of Assouad~\cite{Ass83} which multiplies every scale by an appropriate factor that enables us to control the total expansion over all scales. The lower bound on the distance of the image of every pair of points in $X$ will come from a single critical scale. Instead of showing sufficient contribution for \emph{every} pair, we first focus on certain nets of the space at appropriate scales,
 showing that this suffices to prove the desired lower bound on all
pairs. The bulk of the proof consists of arguing that not only the
net pairs will have sufficient contribution, but that this will
happen with high probability (depending on $\e$), and with very few
dependencies on other net points. To show this we use, as
in~\cite{ABN}, a localization property of the ``padding event" of
the random partitions: this event is stochastically independent of
the ``far away" structure of the partition. The ball expected to be
padded is very small (which causes additional distortion), but on
the other hand the padding probability is high. The fact that there
is non-constant distortion in the lower bound forces us to define
the original distance scales  to be also be a function of $\e$.
Finally, to argue that the desired lower bound happens for all pairs
with positive probability, even though the number of dimensions at
our disposal is small, we use the Lov\'asz Local Lemma.

\subsection*{\bf Acknowledgements} We are grateful to Tim Austin and
Bruce Kleiner for helpful discussions.

\section{Preliminaries}

%In what follows, for a metric space $(X,d)$, the {\em closed} ball
%of radius $r>0$ centered at $x\in X$ will be denoted $B(x,r)\eqdef
%\{y\in X:\ d(x,y)\le r\}$.

Due to~\eqref{eq:reduce to finite subsets} it suffices to prove
Theorem~\ref{thm:main} when $X$ is finite, provided that the
resulting distortion $D(K,\e)$ and dimension $N(K)$ do not depend on
$|X|$. We will therefore assume from now on that $X$ is finite. This
assumption is actually not necessary for our argument, but it serves
the role of allowing us to ignore measurability issues that might
arise in the random partitioning arguments.

For a partition $P$ of $X$ and $x\in X$ let $P(x)\in P$ be the set in $P$ to which $x$ belongs. For $s>0$ the partition $P$ is called $s$-bounded if the diameter of $P(x)$ is at most $s$ for all $x\in X$.

There is a canonical way to obtain partitions from balls. Given $x_1,\ldots,x_n\in X$ and $r_1,\ldots,r_n\in (0,\infty)$, define a partition $P^{x_1,\ldots,x_n}_{r_1,\ldots,r_n}$ of $\bigcup_{j=1}^n B(x_j,r_j)$ by
\begin{equation}\label{eq:def part}
P^{x_1,\ldots,x_n}_{r_1,\ldots,r_n}\eqdef\{B(x_1,r_1)\}\bigcup\left\{B(x_j,r_j)\setminus \bigcup_{i=1}^{j-1}B(x_i,r_i)\right\}_{j=2}^n\setminus\{\emptyset\}.
\end{equation}
In particular, given $s>0$ the partition $P^{x_1,\ldots,x_n}_{r_1,\ldots,r_n}$ is an $s$-bounded partition of $X$ whenever $\{x_1,\ldots,x_n\}$ is an $s/4$-net of $X$ and $r_1,\ldots,r_n\in [s/4,s/2]$.

As in~\cite{ABN}, we will use random partitions of the form
$P^{x_1,\ldots,x_n}_{r_1,\ldots,r_n}$, where the radii
$r_1,\ldots,r_n$ are appropriately chosen random variables. We
present the proofs of the necessary properties of these partitions
below, even though they follow from~\cite{ABN}. We do so since the
argument of~\cite{ABN} is carried out in much greater generality
because in~\cite{ABN} these methods are used for other purposes for
which more general constructions are needed. Our argument below is
simpler than the proof in~\cite{ABN} both because it deals with the
special case that we need, but also because the proof here is
different from~\cite{ABN} (relying, of course, on the same ideas).

\begin{lemma}\label{lem:dist-property}
Fix $x \in X$. For $s>0$ and $K>1$ let $R$ be a random variable with the following density 
\begin{equation}\label{eq:density}
\phi_s(r)\eqdef\frac{16K^8\log K}{s(K^4-1)}K^{-16r/s}\1_{[s/4,s/2]}(r).
\end{equation}
Then for every $\beta>0$ and every $y\in X$ we have
\begin{multline}\label{eq:double intersecton}
\Pr\left [ B(x,R)\cap B(y,\beta s)\notin\big\{\emptyset, B(y,\beta
s)\big\}\right]\\ \leq \left(1-K^{-32\beta}\right) \left(\Pr
\left[B(y, \beta s) \cap B(x,R)\neq \emptyset \right] +
\frac{1}{K^4-1}\right).
\end{multline}
\end{lemma}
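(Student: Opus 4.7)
The plan is to reduce the target inequality to a short tail-ratio calculation for $R$. Since $X$ is finite, I will set
\[r_0 \eqdef \min_{z \in B(y,\beta s)} d(x,z), \qquad r_1 \eqdef \max_{z \in B(y,\beta s)} d(x,z).\]
Unfolding definitions shows that $B(x,R) \cap B(y,\beta s) \neq \emptyset$ iff $R \geq r_0$, and $B(y,\beta s) \subseteq B(x,R)$ iff $R \geq r_1$. Consequently, the ``bad'' event in \eqref{eq:double intersecton} coincides with $\{r_0 \leq R < r_1\}$, so its probability equals $\Pr[R \geq r_0] - \Pr[R \geq r_1]$. Applying the triangle inequality to any $z \in B(y,\beta s)$ gives $r_0 \geq d(x,y) - \beta s$ and $r_1 \leq d(x,y) + \beta s$, hence $r_1 - r_0 \leq 2\beta s$.

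A direct integration of \eqref{eq:density} yields $\Pr[R \geq r] = \frac{K^8}{K^4 - 1}(K^{-16r/s} - K^{-8})$ for $r \in [s/4, s/2]$, extended by $1$ to the left of $s/4$ and by $0$ to the right of $s/2$. Adding the constant $\frac{1}{K^4 - 1}$ produces a continuous, non-increasing function $\tilde F(r) \eqdef \Pr[R \geq r] + \frac{1}{K^4 - 1}$ that equals the pure exponential $\frac{K^8}{K^4 - 1}K^{-16r/s}$ on $[s/4, s/2]$ and is constant outside this interval. The core claim I will establish is: for every pair with $0 \leq r_1 - r_0 \leq 2\beta s$,
\[\tilde F(r_1) \geq K^{-32\beta}\,\tilde F(r_0).\]
Granted this, the lemma follows immediately, because $\tilde F(r_0) - \tilde F(r_1) \leq (1 - K^{-32\beta})\tilde F(r_0)$ and by construction $\tilde F(r_0) = \Pr\bigl[B(x,R) \cap B(y,\beta s) \neq \emptyset\bigr] + \frac{1}{K^4 - 1}$.

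The core claim is a one-liner in the interior: if $r_0, r_1 \in [s/4, s/2]$, then $\tilde F(r_1)/\tilde F(r_0) = K^{-16(r_1 - r_0)/s} \geq K^{-32\beta}$. The boundary cases reduce to the interior one via monotonicity of $\tilde F$: when $r_0 < s/4$ one has $\tilde F(r_0) = \tilde F(s/4)$ and applies the interior estimate to the pair $(s/4, r_1)$, while symmetrically when $r_1 > s/2$ one has $\tilde F(r_1) = \tilde F(s/2)$ and uses the pair $(r_0, s/2)$; in both situations the constraint $r_1 - r_0 \leq 2\beta s$ is exactly what the interior bound requires. The main obstacle I anticipate is the extreme case $r_0 \leq s/4$ and $r_1 \geq s/2$, where the required inequality collapses to $K^{-4} = \tilde F(s/2)/\tilde F(s/4) \geq K^{-32\beta}$; this case can only occur when $r_1 - r_0 > s/4$, which together with $r_1 - r_0 \leq 2\beta s$ forces $\beta > 1/8$, making the bound valid.
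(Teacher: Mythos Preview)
Your proof is correct and follows essentially the same approach as the paper: both arguments define the min/max distances $a=r_0$, $b=r_1$ from $x$ to $B(y,\beta s)$, use the triangle inequality to get $r_1-r_0\le 2\beta s$, and then exploit that the tail of $R$ shifted by $\frac{1}{K^4-1}$ is a pure exponential $\frac{K^8}{K^4-1}K^{-16r/s}$ on $[s/4,s/2]$. Your packaging via the function $\tilde F$ and the explicit case analysis of the boundary situations (including the $r_0<s/4$, $r_1>s/2$ case) is slightly more careful than the paper's compressed computation, but the substance is identical.
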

\begin{proof}
Fix $x,y \in X$ and define
$$
a\eqdef\min_{z\in B(y,\beta s)}d(x,z)\quad \mathrm{and}\quad
b\eqdef\max_{z\in B(y,\beta  s)}d(x,z).
$$
%Note that if $a\ge s/2$ then $B(x,R)\cap B(y,\beta s)=\emptyset$
%almost surely, and if $b\le s/4$ then $B(x,R)\cap B(y,\beta
%s)=B(y,\beta s)$ almost surely. In both cases the probability in the
%left hand side of~\eqref{eq:double intersecton} vanishes and there
%is nothing to prove. We can therefore assume that $a< s/2$ and $b>
%s/4$.
By the triangle inequality,
\begin{equation}\label{eq:b-a}
b-a \leq 2\beta s.
\end{equation}
Hence,
\begin{multline}\label{eq:ab}
\Pr\left [ B(x,R)\cap B(y,\beta s)\notin\big\{\emptyset, B(y,\beta
s)\big\}\right]=\int_{\max\{a,s/4\}}^b\phi_s(r)dr\\\le\frac{K^8}{K^4-1}\left(K^{-16\max\{a,s/4\}/s}-K^{-16b/s}\right)\stackrel{\eqref{eq:b-a}}{\le}
\frac{K^8}{K^4-1} K^{-16\max\{a,s/4\}/s}\left(1-K^{-32\beta}\right).
\end{multline}
Similarly,
\begin{equation}\label{eq:a}
\Pr \left[B(y, \beta s) \cap B(x,R)\neq \emptyset \right] =
\int_{\max\{a,s/4\}}^{s/2} \phi_s(r)dr  =
\frac{K^8}{K^4-1}\left(K^{-16\max\{a,s/4\}/s} -K^{-8}\right).
\end{equation}
The desired inequality~\eqref{eq:double intersecton} now follows from~\eqref{eq:ab} and~\eqref{eq:a}.
\end{proof}

\begin{lemma}\label{lem:pad}
Fix $s>0$, $K\ge 2$. Assume that $(X,d)$ has doubling constant at
most $K$. Let $\{x_1,\ldots,x_n\}\subseteq X$ be an $s/4$-net of
$X$, and let $R_1,\ldots,R_n$ be i.i.d. random variables whose
distribution is given by~\eqref{eq:density}. Then for every $y\in X$
and $\beta\in (0,1/40)$ we have,
$$
\Pr\left[B(y,\beta s)\subseteq P^{x_1,\ldots,x_n}_{R_1,\ldots,R_n}(y)\right]\ge K^{-64\beta}.
$$
\end{lemma}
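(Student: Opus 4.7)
The plan is to write the padding event as a disjoint union over the smallest index $j$ for which $B(x_j,R_j)$ meets $B(y,\beta s)$, apply the independence of the radii, and then invoke Lemma~\ref{lem:dist-property} to compare the probability that this first-hitting ball \emph{contains} $B(y,\beta s)$ with the probability that it merely meets it.

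For $1 \le j \le n$ I set $E_j := \{B(x_j,R_j)\cap B(y,\beta s) \ne \emptyset\}$ and $C_j := \{B(y,\beta s) \subseteq B(x_j,R_j)\}$. Then
\[
\bigl\{B(y,\beta s) \subseteq P^{x_1,\ldots,x_n}_{R_1,\ldots,R_n}(y)\bigr\} \;=\; \bigsqcup_{j=1}^n \Bigl(C_j \cap \bigcap_{i<j}\neg E_i\Bigr),
\]
and by the independence of $R_1,\ldots,R_n$,
\[
\Pr\bigl[B(y,\beta s) \subseteq P(y)\bigr] \;=\; \sum_{j=1}^n \Pr[C_j] \prod_{i<j} \Pr[\neg E_i].
\]
Since $\{x_1,\ldots,x_n\}$ is an $s/4$-net, some $i_0$ satisfies $d(x_{i_0},y)\le s/4 \le R_{i_0}$, whence $\Pr[E_{i_0}]=1$ and a telescoping identity gives $\sum_j \Pr[E_j]\prod_{i<j}\Pr[\neg E_i] = 1 - \prod_i \Pr[\neg E_i] = 1$.

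Because the cut event of Lemma~\ref{lem:dist-property} is $U_j = E_j \setminus C_j$ with $\Pr[U_j]=\Pr[E_j]-\Pr[C_j]$, rearranging that lemma yields
\[
\Pr[C_j] \;\ge\; K^{-32\beta}\Pr[E_j] \;-\; \frac{1-K^{-32\beta}}{K^4-1}.
\]
For $j$ with $\Pr[E_j]=0$ we automatically have $\Pr[C_j]=0$, so those terms drop out; summing the lemma bound over $j$ with $\Pr[E_j]>0$ and using the above identity,
\[
\Pr[B(y,\beta s) \subseteq P(y)] \;\ge\; K^{-32\beta} \;-\; \frac{1-K^{-32\beta}}{K^4-1}\cdot m,
\]
where $m$ is the number of net points $x_j$ with $d(x_j,y) < s/2+\beta s$.

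The principal step, and the main technical obstacle, is to bound $m$ via doubling. Passing if necessary to a maximal $s/4$-separated subset of the net (which remains $s/4$-covering), three iterations of the doubling property cover $B(y,s/2+\beta s)$ by $K^3$ balls of radius $(s/2+\beta s)/8$, which is strictly less than $s/8$ since $\beta<1/40<1/2$. Two $s/4$-separated points cannot lie in such a small ball, so $m \le K^3$. The desired inequality $\Pr \ge K^{-64\beta}$ then reduces, after dividing out the common factor $1-K^{-32\beta}$, to $K^{-32\beta}(K^4-1)\ge K^3$, i.e.\ $K^{4-32\beta} - K^{-32\beta} \ge K^3$. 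Since $\beta<1/40$ forces $4-32\beta > 16/5$ and since for $K\ge 2$ one has $K^{16/5} - K^3 = K^3(K^{1/5}-1) > 1$, the inequality goes through. The tight alignment of the constants in the density~\eqref{eq:density}, in Lemma~\ref{lem:dist-property}, and in the hypothesis $\beta<1/40$ is precisely what makes this final numerical estimate just work, and this balance is what one must carefully verify.
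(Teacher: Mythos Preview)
Your proof is correct and follows essentially the same route as the paper's: decompose over the first index whose ball meets $B(y,\beta s)$, apply Lemma~\ref{lem:dist-property}, bound the number of relevant centers by $K^3$ via doubling and the $s/4$-separation of the net, and reduce to the same numerical inequality $K^3/(K^4-1)\le K^{-32\beta}$; the only cosmetic difference is that the paper bounds the complementary ``not padded'' probability from above while you bound the padding probability from below. One small remark: your parenthetical about ``passing to a maximal $s/4$-separated subset'' is both unnecessary (the paper's net convention already includes $s/4$-separation, which is what its own proof uses) and slightly off (such a passage would in general only yield an $s/2$-covering, and in any case would not reduce the count $m$ of \emph{original} centers), but this does not affect the validity of your argument.
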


\begin{proof}
For every $j\in \{1,\ldots,n\}$ consider the following event:
$$
A_j\eqdef \left(\bigcap_{i=1}^{j-1}\left\{B(x_i,R_i)\cap B(y,\beta s)=\emptyset\right\}\right)\bigcap
\big\{B(x_j,R_j)\cap B(y,\beta s)\notin\left\{\emptyset, B(y,\beta s)\right\}\big\}.
$$
For $A_j$ to occur we need in particular to have $B(x_j,R_j)\cap B(y,\beta s)\neq \emptyset$. Since $R_j,\beta s\le s/2$, this implies that $j\in J_y$, where  $$J_y\eqdef \left\{j\in \{1,\ldots,n\}:\ x_j\in B\left(y,s\right)\right\}.$$ We can cover $B(y,s)$ by at most $K^3$ balls of radius $s/8$. Since $\{x_1,\ldots,x_n\}$ is an $s/4$-net, each of these balls can contain at most one of the $x_i$. Thus
\begin{equation}\label{eq:J-y}
|J_y|\le K^3.
\end{equation}
We must have $B(x_j,R_j)\cap B(y,\beta s)\neq \emptyset $ for some $j\in J_y$, and therefore using the independence of $R_1,\ldots R_n$ we see that,
\begin{multline}\label{eq:sum to 1}
1=\sum_{j\in J_y}\Pr\left[B(x_j,R_j)\cap B(y,\beta s)\neq \emptyset\ \wedge \ B(x_i,R_i)\cap B(y,\beta s)=\emptyset\ \forall i<j\right]\\=
\sum_{j\in J_y}\left(\prod_{i=1}^{j-1} \Pr\left[B(x_i,R_i)\cap B(y,\beta s)=\emptyset\right]\right)
\Pr\left[ B(x_j,R_j)\cap B(y,\beta s)\neq \emptyset\right].
\end{multline}

Now, by the definition of the partition $P^{x_1,\ldots,x_n}_{R_1,\ldots,R_n}$ we have
$$
\left\{B(y,\beta s)\not\subseteq P^{x_1,\ldots,x_n}_{R_1,\ldots,R_n}(x)\right\}=\bigcup_{j\in J_y} A_j.
$$
Thus, using the independence of $R_1,\ldots R_n$ once more,
\begin{eqnarray}\label{eq:with K}
&&\!\!\!\!\!\!\!\!\!\!\!\!\!\!\!\!\!\!\nonumber1-\Pr\left[B(y,\beta s)\subseteq P^{x_1,\ldots,x_n}_{R_1,\ldots,R_n}(x)\right]\le \sum_{j\in J_y} \Pr\left[A_j\right]\\&=&
\sum_{j\in J_y}\left(\prod_{i=1}^{j-1} \Pr\left[B(x_i,R_i)\cap B(y,\beta s)=\emptyset\right]\right)\nonumber
\Pr\big[B(x_j,R_j)\cap B(y,\beta s)\notin\left\{\emptyset, B(y,\beta s)\right\}\big]\\\nonumber
&\stackrel{\eqref{eq:double intersecton}\wedge\eqref{eq:sum to 1}}{\le}& \left(1-K^{-32\beta}\right)+\left(1-K^{-32\beta}\right)\frac{|J_y|}{K^4-1}\\
&\stackrel{\eqref{eq:J-y}}{\le} & \left(1-K^{-32\beta}\right)\left(1+\frac{K^3}{K^4-1}\right)\nonumber\\
&\le& 1-K^{-64\beta}\label{eq:numeric},
\end{eqnarray}
Where in~\eqref{eq:numeric} we used the fact that since $K\ge 2$ and $\beta<1/40$, we have $\frac{K^3}{K^4-1}\le K^{-32\beta}$. Indeed, this is equivalent to $32\beta\le \frac{\log(K-K^{-3})}{\log K}$. But, the function $K\mapsto \frac{\log(K-K^{-3})}{\log K}$ is increasing on $(1,\infty)$, since its derivative is $\frac{4}{(K^5-K)\log K}+\frac{\log(K^4/(K^4-1))}{K(\log K)^2}\ge 0$. Thus it suffices to check that $32\beta\le \frac{\log(2-2^{-3})}{\log 2}$, which is true since $\beta<1/40$.
%Observe that if $\emptyset\neq B(x_j,R_j)\cap B(y,\beta s)\subsetneq B(y,\beta s)$ then there exists $z\in X$ such that $d(y,z)\le \beta s$ and $d(x_j,z)>R_j\ge s/4$. %Thus $d(x_j, y)> \frac{s}{4}-\beta s\ge \beta s$, or $x_j\notin B(y,\beta s)$. We can therefore have the following inclusion of events:
%\begin{multline*}
%\left\{\emptyset\neq B(x_j,R_j)\cap B(y,\beta s)\subsetneq B(y,\beta s)\right\}\\\subseteq
%\left\{B(x_j,R_j)\cap B(y,\beta s)\neq \emptyset\ \wedge \ B(x_j,R_j)\cap \left(X\setminus B(y,\beta s)\right)\right\}
%\end{multline*}
\end{proof}

\section{The random embedding}
Fix $\e\in (0,1/2)$, $\theta\in (0,1)$ and $K\ge 2$. Write $K=e^\kappa$, and define
\begin{equation}\label{eq:def N}
N\eqdef \left\lceil \frac{c\kappa}{\theta}\right\rceil=\left\lceil
\frac{c\log K}{\theta}\right\rceil,
 \end{equation}
 where $c>0$ is a universal constant that will be determined later. It will also be convenient to write
 \begin{equation}\label{eq:def tau}
 \tau\eqdef \frac{\e^\theta}{32\kappa^\theta}.
 \end{equation}

 Let $(X,d)$ be a finite metric space whose doubling constant is at most $K$. By normalization assume that $\diam(X)=1$. For every $i\in \N$ let $\{x_1^i,\ldots,x_{n_i}^i\}$ be a $\frac14\tau^{i/(1-\e)}$-net of $X$.
For every $i,k\in \N$ and $j\in \{1,\ldots,n_i\}$ let $R_{ij}^k$ be
a random variable whose density is $\phi_{s}$, as given
in~\eqref{eq:density}, with $s=\tau^{i/(1-\e)}$. We will also use
random variables $\{U_i^k(C):\ i,k\in\N,\ C\subseteq X\}$, each of
which is uniformly distributed on the interval $[0,1]$ (thus for
each $i,k\in \N$ we have $2^{|X|}$ such random variables).
Throughout the argument below it is assumed that the random
variables
\begin{equation}\label{eq:all variables}
\Big\{R_{ij}^k:\ i,k\in \N,\ j\in\{1,\ldots,n_i\}\Big\}\bigcup
\Big\{U_i^k(C):\ i,k\in\N,\ C\subseteq X\Big\}
 \end{equation}
 are mutually independent and defined on some probability space $(\Omega, \Pr)$.

We will now consider the random partitions
\begin{equation}\label{eq:def P_i^k}
P_i^k\eqdef P_{R_{i1}^k,\ldots,R_{in_i}^k}^{x_1^i,\ldots,x_{n_i}^i},
\end{equation}
where $P_{R_{i1}^k,\ldots,R_{in_i}^k}^{x_1,\ldots,x_{n_i}}$ is defined as in~\eqref{eq:def part}. For $i\in \N$ and $k\in \{1,\ldots,N\}$ define a random mapping $f_i^k:X\to \R$ by
\begin{equation}\label{def:basic f}
f_i^k(x)\eqdef U_i^k\left(P_i^k(x)\right)\cdot \min\left\{
\tau^i,64\kappa \tau^{-\frac{i\e}{1-\e}-1}d\left(x,X\setminus
P_i^k(x)\right)\right\}.
\end{equation}
%\begin{multline}\label{def:basic f}
%f_i^k(x)\eqdef U_i\left(P_{R_{i1}^k,\ldots,R_{in_i}^k}^{x_1,\ldots,x_{n_i}}(x)\right)\left(\frac{32\log K}{\e}\right)^{\frac{i\e}{1-\e}}\\\cdot\min\left\{
%\left(\frac{\e}{32\log K}\right)^{\frac{i}{1-\e}},\frac{2^{11}(\log K)^2}{\e}d\left(x,X\setminus P_{R_{i1}^k,\ldots,R_{in_i}^k}^{x_1,\ldots,x_{n_i}}(x)\right)\right\}.
%\end{multline}
Finally, we define a random embedding $F:X\to \R^N$ as follows:
\begin{equation}\label{eq:def F}
F(x)=\left(\frac{\sum_{i=1}^\infty f_i^1(x)}{\sqrt{N}},\ldots,\frac{\sum_{i=1}^\infty f_i^N(x)}{\sqrt{N}}\right)\in \R^N.
\end{equation}
%Here, and throughout this paper, $\R^N$ is assumed to be equipped with the standard Euclidean metric.
Note that by the definition of $f_i^k$, the sums appearing in~\eqref{eq:def F} converge geometrically.

Although $F$ is random, it satisfies the desired $(1-\e)$-H\"older condition deterministically. The randomness will enter when we prove that with positive probability $\|F(x)-F(y)\|_2$ satisfies the desired lower bound for all $x,y\in X$.

\begin{lemma}\label{lem: holder}
For every $x,y\in X$ we have
\begin{multline}\label{eq:Holder}
\|F(x)-F(y)\|_2\le\max_{k\in
\{1,\ldots,N\}}\sum_{i=1}^{\infty}\left|f_i^k(x)-f_i^k(y)\right|
\lesssim
\frac{\kappa^{(1+\theta)(1-\e)}}{\e^{1+\theta}}d(x,y)^{1-\e}\\\lesssim
\left(\frac{\log K}{\e}\right)^{1+\theta}d(x,y)^{1-\e}.
\end{multline}
\end{lemma}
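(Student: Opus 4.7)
The first inequality in \eqref{eq:Holder} is a purely formal consequence of the bound $\|v\|_2\le \sqrt{N}\,\|v\|_\infty$ for $v\in\R^N$: the $\sqrt N$ gain cancels the $\sqrt N$ in the denominators of \eqref{eq:def F}, and the triangle inequality pulls the absolute values inside each coordinate sum. So the real task is to fix a coordinate index $k\in\{1,\ldots,N\}$ and bound $S_k \eqdef \sum_{i=1}^\infty |f_i^k(x)-f_i^k(y)|$ by a constant times $\kappa^{(1+\theta)(1-\e)}\e^{-(1+\theta)}d(x,y)^{1-\e}$, since the third displayed bound then follows from $\kappa=\log K$.

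The heart of the argument is the per-scale estimate
\[
|f_i^k(x)-f_i^k(y)| \;\le\; \min\!\left\{2\tau^i,\ \tfrac{128\kappa}{\tau}\,\tau^{-i\e/(1-\e)}\,d(x,y)\right\}.
\]
The bound $2\tau^i$ is trivial from $U_i^k\in[0,1]$ and the $\min$ in \eqref{def:basic f}. For the linear-in-$d(x,y)$ bound I would split into two cases. If $P_i^k(x)=P_i^k(y)=:C$, the random factor $U_i^k(C)$ is shared, so the difference reduces to $|\min\{\tau^i,a\}-\min\{\tau^i,b\}|\le|a-b|$ applied to the $1$-Lipschitz function $z\mapsto 64\kappa\tau^{-i\e/(1-\e)-1}d(z,X\setminus C)$, giving $64\kappa\tau^{-i\e/(1-\e)-1}d(x,y)$. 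If $P_i^k(x)\ne P_i^k(y)$, then $y\in X\setminus P_i^k(x)$ forces $d(x,X\setminus P_i^k(x))\le d(x,y)$ (and symmetrically), so each of $|f_i^k(x)|,|f_i^k(y)|$ is individually bounded by $64\kappa\tau^{-i\e/(1-\e)-1}d(x,y)$, and the triangle inequality yields the same expression up to a factor of $2$.

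To sum the per-scale estimate I would set $\Delta\eqdef d(x,y)$, $L\eqdef 64\kappa/\tau$, and locate the (possibly non-integer) critical scale $i_0$ where the two terms inside the $\min$ agree, namely $\tau^{i_0/(1-\e)}=L\Delta$, so that both terms equal $2(L\Delta)^{1-\e}$ there. For $i\ge i_0$ the $2\tau^i$ bound dominates and the geometric series in $\tau$ contributes at most a constant multiple of $(L\Delta)^{1-\e}/(1-\tau)$, which is $\lesssim (L\Delta)^{1-\e}$. For $i<i_0$ the distance-dependent bound dominates, and running a geometric series in $\tau^{\e/(1-\e)}<1$ backwards from $i_0$ contributes at most $(L\Delta)^{1-\e}/(1-\tau^{\e/(1-\e)})$. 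Using $\tau=\e^\theta/(32\kappa^\theta)$ one has $L^{1-\e}\lesssim \kappa^{(1+\theta)(1-\e)}/\e^{\theta(1-\e)}$. The main obstacle is the factor $1/(1-\tau^{\e/(1-\e)})$: writing $\tau^{\e/(1-\e)}=\exp\!\big({-}\tfrac{\e}{1-\e}\log(1/\tau)\big)$, using $1-e^{-t}\ge t/2$ on the relevant bounded range, and noting that $\log(1/\tau)=\theta\log(32\kappa^\theta/\e^\theta)$ is at least a positive absolute constant (since $\kappa\ge\log 2$ and $\e<1/2$), one obtains $1-\tau^{\e/(1-\e)}\gtrsim \e$. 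Multiplying the three factors gives $S_k\lesssim \kappa^{(1+\theta)(1-\e)}\e^{-\theta(1-\e)}\e^{-1}\Delta^{1-\e}\le \kappa^{(1+\theta)(1-\e)}\e^{-(1+\theta)}\Delta^{1-\e}$, as required.
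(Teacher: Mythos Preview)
Your proof is correct and follows essentially the same route as the paper: the same per-scale estimate $|f_i^k(x)-f_i^k(y)|\le\min\{\tau^i,\,64\kappa\tau^{-i\e/(1-\e)-1}d(x,y)\}$ (your constants are a harmless factor of $2$ looser), followed by the same split of the geometric sum at the crossover scale and the same use of $1/(1-\tau^{\e/(1-\e)})\lesssim 1/\e$. One tiny wording quibble: $t=\tfrac{\e}{1-\e}\log(1/\tau)$ is not actually bounded (it grows with $\kappa$), but your conclusion $1-e^{-t}\gtrsim\e$ still holds since for $t\ge 1$ one has $1-e^{-t}\ge 1-e^{-1}>\e$ directly.
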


\begin{proof}
We first claim that for all $i\in \N$ and $k\in \{1,\ldots,N\}$ we have
\begin{equation}\label{eq:upper min}
\left|f_i^k(x)-f_i^k(y)\right|\le \min\left\{
\tau^i,64\kappa \tau^{-\frac{i\e}{1-\e}-1}d(x,y)\right\}.
\end{equation}
To verify~\eqref{eq:upper min} we may assume without loss of generality that $f_i^k(x)>f_i^k(y)$.
If $P_i^k(x)\neq P_i^k(y)$ then,
\begin{equation*}
f_i^k(x)-f_i^k(y)\le f_i^k(x)\le \min\left\{
\tau^i,64\kappa \tau^{-\frac{i\e}{1-\e}-1}d\left(x,X\setminus P_i^k(x)\right)\right\},
\end{equation*}
which is trivially bounded from above by the right hand side
of~\eqref{eq:upper min} since $y\in X\setminus P_i^k(x)$. If
$P_i^k(x)=P_i^k(y)=C$, then it cannot be the case that
$f_i^k(y)=U_i^k\left(C\right)\tau^{i}$, since otherwise $f_i^k(x)\le
f_i^k(y)$, contrary to our assumption. We therefore necessarily have
\begin{multline}\label{eq:conjunction}
f_i^k(x)-f_i^k(y)=f_i^k(x)-64U_i^k(C)\kappa \tau^{-\frac{i\e}{1-\e}-1}d(y,X\setminus C)\\
\le 64U_i^k(C)\kappa \tau^{-\frac{i\e}{1-\e}-1}\left(d(x,X\setminus
C)-d(y,X\setminus C)\right)\le 64\kappa
\tau^{-\frac{i\e}{1-\e}-1}d(x,y).
\end{multline}
Since, by the definition~\eqref{def:basic f}, $f_i^k(x),f_i^k(y)\in [0,\tau^i]$, we also have $f_i^k(x)-f_i^k(y)\le \tau^i$. This, in conjunction with~\eqref{eq:conjunction}, concludes the proof of~\eqref{eq:upper min}.

The first inequality of~\eqref{eq:Holder} is an immediate
consequence of the definition~\eqref{eq:def F}. As $K=e^\kappa$, the
third inequality in~\eqref{eq:Holder} is a trivial overestimate.
Note also that since for all $z\in X$ we have $\|F(z)\|_2\le
\sum_{i=1}^\infty \tau^i\lesssim \tau$, the bound in the second
inequality of~\eqref{eq:Holder} holds true if
$d(x,y)>\tau^{1+\frac{1}{1-\e}}/(64\kappa)$. We may therefore assume
that $d(x,y)\le \tau^{1+\frac{1}{1-\e}}/(64\kappa)$. Let $m\in \N$
be the integer satisfying
\begin{equation}\label{eq:m}
\frac{\tau}{64\kappa}\cdot\tau^{\frac{m+1}{1-\e}}< d(x,y)\le\frac{\tau}{64\kappa}\cdot\tau^{\frac{m}{1-\e}}.
\end{equation}
Then
\begin{multline}\label{eq:to estimate two sums}
\|F(x)-F(y)\|_2\le \sqrt{\frac{1}{N}\sum_{k=1}^N\left(\sum_{i=1}^{\infty}\left|f_i^k(x)-f_i^k(y)\right|\right)^2}\le \max_{k\in \{1,\ldots,N\}}\sum_{i=1}^{\infty}\left|f_i^k(x)-f_i^k(y)\right|\\\stackrel{\eqref{eq:upper min}}{\lesssim}
\frac{\kappa}{\tau}d(x,y)\sum_{i=1}^{m}\tau^{-\frac{i\e}{1-\e}}+\sum_{i=m+1}^\infty \tau^i\stackrel{\eqref{eq:def tau}}{\lesssim} \frac{\kappa^{1+\theta}}{\e^\theta}d(x,y)\sum_{i=1}^{m}\tau^{-\frac{i\e}{1-\e}}+\sum_{i=m+1}^\infty \tau^i.
\end{multline}
We estimate the two sums in~\eqref{eq:to estimate two sums} separately (recalling that $0<\e, \tau<1/2$):
\begin{equation}\label{eq:first sum}
\sum_{i=1}^{m}\tau^{-\frac{i\e}{1-\e}}=
\frac{\tau^{-\frac{m\e}{1-\e}}-1}{1-\tau^{\frac{\e}{1-\e}}}\lesssim \frac{1}{\e}\tau^{-\frac{m\e}{1-\e}}\stackrel{\eqref{eq:m}}{\lesssim} \frac{\tau^\e}{\e\kappa^\e d(x,y)^\e}\stackrel{\eqref{eq:def tau}}{\lesssim}
\frac{1}{\e\kappa^{(1+\theta)\e}d(x,y)^\e}.
\end{equation}
Similarly,
\begin{equation}\label{eq:second sum}
\sum_{i=m+1}^\infty \tau^i=
\frac{\tau^{m+1}}{1-\tau} \stackrel{\eqref{eq:m}}{\lesssim} \frac{\kappa^{1-\e}}{\tau^{1-\e}} d(x,y)^{1-\e}\stackrel{\eqref{eq:def tau}}{\lesssim}\frac{\kappa^{(1+\theta)(1-\e)}}{\e^\theta}d(x,y)^{1-\e}.
\end{equation}
The desired bound~\eqref{eq:Holder} now follows from
substituting~\eqref{eq:first sum} and~\eqref{eq:second sum}
into~\eqref{eq:to estimate two sums}.
\end{proof}

%\subsection{The almost-everywhere H\"older property}

\subsection{The H\"older lower bound holds with positive probability}

  For every $i\in \N$ write
   \begin{equation}\label{eq:def delta i}
  \delta_i\eqdef \tau^{\frac{i+2}{1-\e}}\left(\frac{4\e}{c^*\kappa}\right)^{\frac{1}{1-\e}},
  \end{equation}
  where $c^*$ is the implied universal constant in the final inequality of~\eqref{eq:Holder}. Let $\mathscr N_i$ be a $\delta_i$-net of $X$.

  Consider the following set:
  \begin{equation}\label{eq:def M}
  M\eqdef \left\{(i,u,v)\in \N\times \mathscr N_i\times \mathscr N_i:\ \tau^{\frac{i}{1-\e}}< d(u,v)\le 3\tau^{\frac{i-1}{1-\e}}\right\}.
  \end{equation}
  For every $(i,u,v)\in M$ define $G(i,u,v)\subseteq \{1,\ldots,N\}$ as follows.
  \begin{equation}\label{eq:def G}
  G(i,u,v)\eqdef \left\{k\in \{1,\ldots,N\}:\ \left|\sum_{j=1}^\infty f_j^k(u)-\sum_{j=1}^\infty f_j^k(v)\right|\ge \frac{\tau^{i+1}}{2}\right\}.
  \end{equation}
  %\alert{I'm not sure it's worth it, but we can use only events $S,T$ (as defined later) to start with, instead of $E$.}

  For every $(i,u,v)\in M$ let $E(i,u,v)\subseteq \Omega$ be the following event:
  \begin{equation}\label{eq:def E(i,u,v)}
  E(i,u,v)\eqdef \left\{|G(i,u,v)|\ge \frac{N}{2}\right\},
  \end{equation}
  and consider the event $E\subseteq \Omega$ given by:
  \begin{equation}\label{eq:def E}
  E\eqdef \bigcap_{(i,u,v)\in M}E(i,u,v).
  \end{equation}
  The relevance of the event $E$ is explained in the following lemma.
  \begin{lemma}\label{lem:on E}
  If the event $E$ occurs then for all $x,y\in X$ we have
  $$
  \|F(x)-F(y)\|_2\gtrsim \left(\frac{\e}{\log K}\right)^{2\theta}d(x,y)^{1-\e}.
  $$
  \end{lemma}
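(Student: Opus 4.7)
The plan is to localize: given $x,y\in X$, pick a ``critical scale'' $i\in\N$ for the pair, approximate $x,y$ by net points $u,v\in\mathscr N_i$, extract the lower bound from the event $E(i,u,v)$ guaranteed by $E$, and use Lemma~\ref{lem: holder} to pay for the passage from $(x,y)$ back to $(u,v)$. The three-line identity $\|F(x)-F(y)\|_2\ge \|F(u)-F(v)\|_2-\|F(x)-F(u)\|_2-\|F(y)-F(v)\|_2$ will drive the argument, with $\delta_i$ exactly tuned so that both error terms are much smaller than the main term.

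Since $\tau^{-1/(1-\e)}>1$, choose the unique $i\in \N$ such that $2\tau^{i/(1-\e)}<d(x,y)\le 2\tau^{(i-1)/(1-\e)}$, and let $u,v\in\mathscr N_i$ satisfy $d(x,u),d(y,v)\le\delta_i$. The formula~\eqref{eq:def delta i} is designed so that $2\delta_i$ is much smaller than $\tau^{i/(1-\e)}$ (the extra factor $\tau^{2/(1-\e)}$ more than absorbs $(4\e/(c^*\kappa))^{1/(1-\e)}$ once one uses $\tau<1/2$), so the triangle inequality yields $\tau^{i/(1-\e)}<d(u,v)\le 3\tau^{(i-1)/(1-\e)}$, i.e.\ $(i,u,v)\in M$. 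Hence, on the event $E$ at least $N/2$ coordinates $k\in\{1,\ldots,N\}$ satisfy $\bigl|\sum_{j=1}^\infty f_j^k(u)-\sum_{j=1}^\infty f_j^k(v)\bigr|\ge \tau^{i+1}/2$, and inserting this into the definition~\eqref{eq:def F} gives $\|F(u)-F(v)\|_2\ge \tau^{i+1}/(2\sqrt 2)$.

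For the passage to the net, apply Lemma~\ref{lem: holder} with $c^*$ its universal third-inequality constant: $\|F(x)-F(u)\|_2\le c^*(\kappa/\e)^{1+\theta}\delta_i^{1-\e}$. Substituting $\delta_i^{1-\e}=\tau^{i+2}\cdot 4\e/(c^*\kappa)$ and then using $(\kappa/\e)^\theta\tau=1/32$ from~\eqref{eq:def tau} collapses the right-hand side to $\tau^{i+1}/8$; the identical bound holds for $\|F(y)-F(v)\|_2$. The triangle inequality therefore yields $\|F(x)-F(y)\|_2\ge \bigl(\tfrac{1}{2\sqrt 2}-\tfrac14\bigr)\tau^{i+1}\gtrsim \tau^{i+1}$, and the relation $d(x,y)\le 2\tau^{(i-1)/(1-\e)}$ gives $\tau^{i+1}=\tau^2\tau^{i-1}\gtrsim \tau^2 d(x,y)^{1-\e}\asymp (\e/\log K)^{2\theta}d(x,y)^{1-\e}$, as claimed.

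The main conceptual subtlety is the two-sided calibration encoded in~\eqref{eq:def delta i}: $\delta_i$ must be small enough that the net-approximation pair $(u,v)$ still lies in $M$, and small enough that the Lemma~\ref{lem: holder} error at scale $\delta_i$ does not swamp the $\tau^{i+1}/(2\sqrt 2)$ main contribution delivered by $E(i,u,v)$. Both requirements are simultaneously met by the factor $(4\e/(c^*\kappa))^{1/(1-\e)}$ in $\delta_i$, which is precisely what makes this lemma work and justifies the otherwise opaque exponents in the definition of $\delta_i$.
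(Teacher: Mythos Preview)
Your proof is correct and follows essentially the same route as the paper: pick the critical scale $i$, pass to net points $u,v\in\mathscr N_i$, invoke $E(i,u,v)$, and use the H\"older upper bound of Lemma~\ref{lem: holder} to absorb the net-approximation error. The only cosmetic differences are that you apply the triangle inequality once in $\ell_2$ (bounding $\|F(u)-F(v)\|_2\ge\tau^{i+1}/(2\sqrt2)$ and then subtracting $\|F(x)-F(u)\|_2,\|F(y)-F(v)\|_2$), whereas the paper stays coordinate-wise and shows each $k\in G(i,u,v)$ already gives $\bigl|\sum_j f_j^k(x)-\sum_j f_j^k(y)\bigr|\ge\tau^{i+1}/4$; and your factor-of-$2$ shift in the choice of $i$ makes the membership $(i,u,v)\in M$ explicit, a verification the paper leaves to the reader.
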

  \begin{proof}
  Let $i$ be the integer such that
  \begin{equation}\label{eq:i choice}
\tau^{\frac{i}{1-\e}}< d(x,y)\le \tau^{\frac{i-1}{1-\e}}
  \end{equation}
  Since $\mathscr N_i$ is a $\delta_i$-net, where $\delta_i$ is given in~\eqref{eq:def delta i}, there exist $u,v\in \mathscr N_i$ such that
  \begin{equation}\label{eq:u,v choice}
  \max\{d(u,x),d(v,y)\}\le \tau^{\frac{i+2}{1-\e}}\left(\frac{4\e}{c^*\kappa}\right)^{\frac{1}{1-\e}}.
  \end{equation}
Assume that $k\in G(i,u,v)$. By Lemma~\ref{lem: holder} we have
\begin{multline}\label{eq:use holder for net move}
\max\left\{\left|\sum_{j=1}^\infty f_j^k(u)-\sum_{j=1}^\infty f_j^k(x)\right|,\left|\sum_{j=1}^\infty f_j^k(v)-\sum_{j=1}^\infty f_j^k(y)\right|\right\}\\
\stackrel{\eqref{eq:Holder}\wedge\eqref{eq:u,v choice}}{\le} c^*\left(\frac{\kappa}{\e}\right)^{1+\theta} \frac{4\tau^{i+2}\e}{c^*\kappa}\stackrel{\eqref{eq:def tau}}{=} \frac{\tau^{i+1}}{8}.
\end{multline}
Since $k\in G(i,u,v)$ it follows that
\begin{equation}\label{eq:subtract}
\left|\sum_{j=1}^\infty f_j^k(x)-\sum_{j=1}^\infty f_j^k(y)\right|\stackrel{\eqref{eq:def G}\wedge\eqref{eq:use holder for net move}}{\ge}
 \frac{\tau^{i+1}}{2}-2\cdot \frac{\tau^{i+1}}{8}=\frac{\tau^{i+1}}{4}.
\end{equation}
  Since we are  assuming that the event $E$ occurs, the lower bound~\eqref{eq:subtract} holds for at least $N/2$ values of $k\in \{1,\ldots,N\}$. Thus, by the definition of $F$,
  \begin{equation*}
  \|F(x)-F(y)\|_2\gtrsim \tau^{i+1}\stackrel{\eqref{eq:def tau}\wedge\eqref{eq:i choice}}{\gtrsim} \left(\frac{\e}{\kappa}\right)^{2\theta}d(x,y)^{1-\e}.\qedhere
  \end{equation*}
  \end{proof}

  Due to Lemma~\ref{lem: holder} and Lemma~\ref{lem:on E}, Theorem~\ref{thm:main} will be
  proven (with the bounds claimed in~\eqref{eq:our bounds}, with
  $\delta=3\theta$), once we establish the following lemma.

  \begin{lemma}\label{lem:E positive}
 We have  $\Pr[E]>0$, provided $c$ in~\eqref{eq:def N} is a large enough universal constant. \end{lemma}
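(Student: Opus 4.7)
The plan is to apply the Lovász Local Lemma (LLL) to the family $\{E(i,u,v)^c : (i,u,v) \in M\}$, as foreshadowed in the paper's overview. Two inputs are needed: a per-event upper bound $\Pr[E(i,u,v)^c] \leq e^{-\Omega(N)}$, and a bound of the form $K^{O(1)}$ on the dependence degree of the associated graph. Given both, LLL forces $\Pr[E] > 0$ as soon as $N \geq c\log K/\theta$ for a sufficiently large universal constant $c$, matching the definition~\eqref{eq:def N}.

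First I will show that for every fixed $(i,u,v) \in M$ and every fixed $k \in \{1,\ldots,N\}$ one has $\Pr[k \in G(i,u,v)] \geq 1 - O(\tau)$. Since $d(u,v) > \tau^{i/(1-\e)}$ strictly exceeds the diameter of every cell of $P_i^k$, we have $P_i^k(u) \neq P_i^k(v)$ deterministically. I would then apply Lemma~\ref{lem:pad} at scale $s = \tau^{i/(1-\e)}$ with $\beta = \tau/(64\kappa)$, which comfortably satisfies $\beta < 1/40$: each of $u$ and $v$ is padded with probability at least $K^{-64\beta} = e^{-\tau}$, so both are padded simultaneously with probability $\geq 1 - O(\tau)$ by a union bound. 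The choice of $\beta$ is tuned so that on this event $f_i^k(u) = U_i^k(C_u)\tau^i$ and $f_i^k(v) = U_i^k(C_v)\tau^i$ with $C_u := P_i^k(u) \neq P_i^k(v) =: C_v$. Next condition on \emph{all} the randomness other than $U_i^k(C_u)$: the quantity $\sum_{j=1}^\infty f_j^k(u) - \sum_{j=1}^\infty f_j^k(v)$ then takes the form $\tau^i U_i^k(C_u) + W$ for some $W$ that is constant under this conditioning, and since $U_i^k(C_u)$ is an independent uniform on $[0,1]$, the probability that $\tau^i U_i^k(C_u) + W$ lies in the interval $[-\tau^{i+1}/2,\tau^{i+1}/2]$ of length $\tau^{i+1}$ is at most $\tau$. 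Combining, $\Pr[k \in G(i,u,v)] \geq 1 - O(\tau) \geq 3/4$ since $\tau$ is small. Because the $N$ coordinates use mutually independent families of random variables from~\eqref{eq:all variables}, the indicators $\{\mathbf{1}[k \in G(i,u,v)]\}_{k=1}^N$ are i.i.d., so a standard Chernoff bound yields $\Pr[E(i,u,v)^c] \leq e^{-c' N}$ for some absolute constant $c' > 0$.

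For the dependence structure, the event $E(i,u,v)$ is determined by those random variables $R_{j\ell}^k$ and $U_j^k(C)$ whose indices correspond to net points $x_\ell^j$ or cells $C$ that intersect an $O(\tau^{j/(1-\e)})$-neighborhood of $u$ or $v$ at each scale $j$. By the doubling property of $X$, at each scale there are only $K^{O(1)}$ such net points, and a counting argument based on doubling bounds the number of triples $(i',u',v') \in M$ whose dependency set intersects that of $(i,u,v)$ by $K^{O(1)}$ (with a mild further dependence on $1/\theta$ from the effective number of scales contributing nontrivially). Applying LLL with the condition $e \cdot (D+1) \cdot e^{-c'N} \leq 1$, this is satisfied as soon as $N = \lceil c\log K/\theta\rceil$ with $c$ a sufficiently large universal constant, delivering $\Pr[E] > 0$.

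The main obstacle I anticipate is the bookkeeping for the dependency-graph degree, since $E(i,u,v)$ formally depends on random variables at every scale $j \in \N$; the $\theta$-dependence in $N$ is expected to enter through the effective number of scales that contribute nontrivially to the dependency count. The anti-concentration step, by contrast, is conceptually clean: the trick of isolating the single independent uniform $U_i^k(C_u)$, rescaled by $\tau^i$, and absorbing the contribution of every other scale into the deterministic shift $W$ decouples the critical scale $i$ from all others and produces the clean bound $\tau$ on the probability of landing in the forbidden interval of length $\tau^{i+1}$. This idea follows the spirit of~\cite{ABN}.
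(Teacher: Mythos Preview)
Your anti-concentration step is correct and indeed gives $\Pr[k\notin G(i,u,v)]\le O(\tau)$. The gap is in the two bounds you feed to the Local Lemma. The dependency degree is \emph{not} $K^{O(1)}$: already at a single scale $i$, the points $u',v'$ range over the $\delta_i$-net $\mathscr N_i$ with $\delta_i\asymp\tau^{(i+2)/(1-\e)}(\e/\kappa)^{1/(1-\e)}$, while the constraint $(i,u',v')\in M$ confines them to a ball of radius $\asymp\tau^{(i-1)/(1-\e)}$; the doubling count then gives degree $K^{O(\log\log K+\log(1/\e))}$ (this is Lemma~\ref{lem:degree bound}), depending on $\e$ and not merely on $\theta$. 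Against this $\e$-dependent degree your probability bound $e^{-c'N}$ with absolute $c'$ is fatal: the LLL condition $e(d+1)q\le 1$ would force $N\gtrsim(\log K)\log(1/\e)$, reintroducing the very $\e$-dependence the theorem is meant to eliminate. What saves the argument is that your own estimate $1-O(\tau)$ actually yields, via Chernoff, the much stronger tail $(O(\tau))^{N/2}\asymp(\e/\log K)^{\theta N/2}$ (see~\eqref{eq:chernoff}); it is precisely this $\tau$-dependence that cancels the $\log(1/\e)$ in the degree exponent and leaves $N\gtrsim(\log K)/\theta$.

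There is also the structural problem you anticipate but do not solve: $E(i,u,v)$ depends on \emph{every} scale $j\ge 1$, so its naive dependency graph has edges to triples at all scales and the degree cannot be bounded in terms of $K,\e$ alone (it would involve $|X|$). The paper circumvents this by replacing $E(i,u,v)$ with the truncated event $T(i,u,v)$ of~\eqref{eq:def T}, which by Lemma~\ref{lem:inclusion} sits inside $E(i,u,v)$ and depends only on scales $j\le i$, and then applies the \emph{ordered, conditional} form of the LLL in Lemma~\ref{lem:local} with $\rho(T(i,u,v))=i$. The point of Lemma~\ref{lem:condisional bound} is that after conditioning on all variables at scales $<i$ and on the scale-$i$ variables away from $u$ (the sets $\mathscr X,\mathscr Y$ there), the padding event for $u$ and the uniform $U_i^k(P_i^k(u))$ remain independent of the conditioning, so your anti-concentration trick survives. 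Running the argument unconditionally and plugging into the symmetric LLL, as you propose, does not close.
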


  The key tool used in the proof of Lemma~\ref{lem:E positive} is the Lov\'asz Local
 Lemma~\cite{EL75}. The variant of this lemma that is stated below is not the
same as the classical formulation of the Lov\'asz Local Lemma, but
it is a consequence of it, as explained in~\cite{ABN}, where a more
general statement is needed. For more information on the Lov\'asz
Local Lemma and some of its striking applications, see for example
the survey of Alon~\cite{Alo89}.

  \begin{lemma}[Lov\'asz Local Lemma]\label{lem:local} Fix $q\in (0,1)$ and $d\in \N$.
Let $\mathcal{A}_1,\mathcal{A}_2,\dots\mathcal{A}_n$ be measurable sets in
some probability space $(\Omega,\Pr)$. Let $G=(V,E_G)$ be a graph on the vertex set $V=\{\mathcal{A}_1,\mathcal{A}_2,\dots\mathcal{A}_n\}$ with maximal degree $d$. Let $\rho:\{\mathcal{A}_1,\mathcal{A}_2,\dots\mathcal{A}_n\}\to \N$ be a mapping that satisfies the condition
$$
\{\mathcal{A}_i,\mathcal{A}_j\}\in E_G \implies \rho(\mathcal{A}_i)= \rho(\mathcal{A}_j).
$$
Assume that for any
$i\in\{1,\dots,n\}$ we have
\[
\Pr\left[\bigcap_{j\in
Q}(\Omega\setminus \mathcal{A}_j)\cap \mathcal{A}_i\right] \le q \Pr\left[\bigcap_{j\in
Q}(\Omega\setminus \mathcal{A}_j)\right]
\]
for all
$$
Q\subseteq\left\{j\in\{1,\ldots,n\}:\ \{\mathcal{A}_i,\mathcal{A}_j\}\notin E_G\ \wedge\
\rho(\mathcal{A}_i)\ge \rho(\mathcal{A}_j)\right\}.
$$
Assume also that
\begin{equation}\label{LLL condition}
eq(d+1)\le 1.
\end{equation}
Then
\[
\Pr\left[\bigcap_{i=1}^n(\Omega\setminus\mathcal{A}_i)\right]>0
\]
\end{lemma}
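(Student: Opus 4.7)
My plan is to apply the Lovász Local Lemma (Lemma~\ref{lem:local}) to the family of ``bad'' events $\mathcal{A}_{(i,u,v)}\eqdef\Omega\setminus E(i,u,v)$ indexed by $(i,u,v)\in M$, with the grouping function $\rho(\mathcal{A}_{(i,u,v)})=i$ so that events at different scales land in different $\rho$-classes. The dependency graph $G$ will have an edge between $\mathcal{A}_{(i,u,v)}$ and $\mathcal{A}_{(i,u',v')}$ precisely when they share the scale $i$ and $\{u,v\},\{u',v'\}$ lie within spatial distance $O(\tau^{i/(1-\e)})$ of each other, which is exactly the locus where the underlying scale-$i$ partition variables $R_{ij}^k$ and cluster-indexed variables $U_i^k(C)$ relevant to the two events can overlap.

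The core probabilistic estimate is the bound $\Pr[\mathcal{A}_{(i,u,v)}]\le (C\tau)^{N/2}$ for a universal $C$. I fix $(i,u,v)\in M$ and a coordinate $k\in\{1,\dots,N\}$. Since every cluster of $P_i^k$ has diameter at most $\tau^{i/(1-\e)}$ and $d(u,v)>\tau^{i/(1-\e)}$ by the definition of $M$ in~\eqref{eq:def M}, the points $u$ and $v$ automatically lie in distinct clusters $C_u\ne C_v$, so $U_i^k(C_u)$ and $U_i^k(C_v)$ are independent of each other and of all other randomness. I invoke Lemma~\ref{lem:pad} with $\beta=\tau/(64\kappa)$, which lies well below $1/40$ by~\eqref{eq:def tau}, to conclude that with probability at least $K^{-64\beta}=e^{-\tau}\ge 1-\tau$ the ball $B(u,\beta\tau^{i/(1-\e)})$ is contained in $C_u$. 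On this padding event the product $64\kappa\tau^{-i\e/(1-\e)-1}\cdot\beta\tau^{i/(1-\e)}$ simplifies to $\tau^i$, forcing $f_i^k(u)=U_i^k(C_u)\tau^i$. Conditioning on every random variable other than $U_i^k(C_u)$, the quantity $\sum_j f_j^k(u)-\sum_j f_j^k(v)$ becomes an affine function of $U_i^k(C_u)$ with slope $\tau^i$, hence uniformly distributed on an interval of length $\tau^i$; therefore it falls in $[-\tau^{i+1}/2,\tau^{i+1}/2]$ with probability at most $\tau$. Altogether, $\Pr[k\notin G(i,u,v)]\lesssim\tau$. Because the random variables with different superscripts $k$ are disjoint, the events $\{k\in G(i,u,v)\}_{k=1}^N$ are mutually independent, and a Chernoff-type estimate then yields $\Pr[|G(i,u,v)|<N/2]\le(C'\tau)^{N/2}$.

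The maximum degree of $G$ is bounded using the doubling property: the number of points of $\mathscr{N}_i$ within distance $O(\tau^{i/(1-\e)})$ of any fixed point is at most $(\tau^{i/(1-\e)}/\delta_i)^{O(\log K)}$, and substituting the definitions of $\delta_i$ from~\eqref{eq:def delta i} and $\tau$ from~\eqref{eq:def tau} gives the bound $d\le(\kappa/\e)^{c_1\kappa}$ for a universal constant $c_1$. For the conditional probability hypothesis of Lemma~\ref{lem:local}, I exploit the localization property of the padding event emphasized in the introduction: for any set $Q$ of non-adjacent events with $\rho$-value at most $i$, the conditioning event $\bigcap_{q\in Q}(\Omega\setminus\mathcal{A}_q)$ is determined by random variables at other scales, or at scale $i$ but supported far from $\{u,v\}$, all of which are independent of the critical scale-$i$ variables near $\{u,v\}$; hence the bound from the previous paragraph persists verbatim under the conditioning. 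Substituting into the local lemma's condition $eq(d+1)\le 1$ and using $\tau\le(\e/\kappa)^\theta$, the inequality reduces to $(\kappa/\e)^{c_1\kappa-\theta N/2}\lesssim 1$, which is satisfied as soon as $N\ge 2c_1\kappa/\theta+O(1)$. Taking $c\ge 3c_1$ in the definition~\eqref{eq:def N} secures this for all admissible $K,\e$, and Lemma~\ref{lem:local} then yields $\Pr[E]>0$. The main obstacle, and the reason the construction's parameters are so delicately calibrated, is the verification of the conditional probability bound in the presence of the inter-scale coupling that is built into $G(i,u,v)$: the same $k$-th coordinate aggregates contributions from all scales $j$, so without the spatial localization of the padding event one could not decouple the scale-$i$ fluctuation of $U_i^k(C_u)$ from the conditioning.
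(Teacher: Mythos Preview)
Your proposal does not address Lemma~\ref{lem:local} at all: that lemma is the (variant of the) Lov\'asz Local Lemma itself, an abstract probabilistic statement about events $\mathcal{A}_1,\ldots,\mathcal{A}_n$ in an arbitrary probability space. The paper does not prove it either; it is quoted with references to~\cite{EL75} and~\cite{ABN}. What you have written is instead a sketch of Lemma~\ref{lem:E positive}, which \emph{applies} Lemma~\ref{lem:local} to the specific events coming from the random embedding $F$.

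Read as an attempt at Lemma~\ref{lem:E positive}, there is a genuine gap in the conditioning step. You take the bad events to be $\mathcal{A}_{(i,u,v)}=\Omega\setminus E(i,u,v)$, where $E(i,u,v)$ is defined via the set $G(i,u,v)$ of~\eqref{eq:def G}, and $G(i,u,v)$ involves the \emph{full} infinite sums $\sum_{j=1}^{\infty} f_j^k$. You then assert that for any admissible $Q$, the conditioning event $\bigcap_{q\in Q}(\Omega\setminus\mathcal{A}_q)$ ``is determined by random variables at other scales, or at scale $i$ but supported far from $\{u,v\}$''. This is false. For $(i',u',v')\in Q$ with $i'<i$, the event $E(i',u',v')$ depends on $f_i^k(u')$ and $f_i^k(v')$, hence on the scale-$i$ variables $R_{ij}^k$ for $j\in J(i,u')\cup J(i,v')$ and on $U_i^k(P_i^k(u')), U_i^k(P_i^k(v'))$. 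Since your dependency graph (like the paper's graph $H$ in~\eqref{eq:def edges}) places no edge between events at different scales, nothing forbids $u'$ or $v'$ from lying within $2\tau^{i/(1-\e)}$ of $u$; in that case $J(i,u')\cap J(i,u)\neq\emptyset$, and the conditioning genuinely constrains the very scale-$i$ radii and uniform variables that govern the padding of $u$. Your bound on the conditional probability therefore does not follow.

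The paper's remedy is precisely the passage from $E(i,u,v)$ to the truncated event $T(i,u,v)$ of~\eqref{eq:def T}, defined through $L(i,u,v)$ in~\eqref{eq:def L}, which uses only the partial sums $\sum_{j=1}^{i} f_j^k$. Lemma~\ref{lem:inclusion} gives $T(i,u,v)\subseteq E(i,u,v)$, so it suffices to apply Lemma~\ref{lem:local} to the complements of the $T$'s. For these events the localization claim you need is actually true: when $i'<i$ the event $T(i',u',v')$ depends only on scales $1,\ldots,i'$ and hence on no scale-$i$ variable whatsoever, which is exactly how Lemma~\ref{lem:condisional bound} isolates the sets $\mathscr{X}$ and $\mathscr{Y}$. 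This truncation is the missing idea in your argument.
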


To use Lemma~\ref{lem:local} we proceed as follows. For  $(i,u,v)\in
M$ consider the following random subset of $\{1,\ldots,k\}$:
\begin{equation}\label{eq:def L}
L(i,u,v)\eqdef \left\{k\in \{1,\ldots,N\}:\ \left|\sum_{j=1}^i f_j^k(u)-\sum_{j=1}^i f_j^k(v)\right|\ge 2 \tau^{i+1}\right\}.
\end{equation}
For $(i,u,v)\in M$ and $k\in \{1,\ldots,N\}$ define the following event:
\begin{equation}\label{eq:def S}
S(i,u,v,k)\eqdef\{k\in L(i,u,v)\}.
\end{equation}
Finally, we also define the following event for all $(i,u,v)\in M$:
\begin{equation}\label{eq:def T}
T(i,u,v)\eqdef \left\{|L(i,u,v)|\ge \frac{N}{2}\right\}.
\end{equation}

  \begin{lemma}\label{lem:inclusion}
  For all $(i,u,v)\in M$ we have $T(i,u,v)\subseteq E(i,u,v)$.
  \end{lemma}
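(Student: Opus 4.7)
The plan is to prove a pointwise inclusion: show that for each sample point $\omega \in T(i,u,v)$ and each $k$, if $k \in L(i,u,v)(\omega)$ then $k \in G(i,u,v)(\omega)$. In other words, I will show $L(i,u,v) \subseteq G(i,u,v)$ deterministically, which then upgrades to the set-level inclusion $T(i,u,v) \subseteq E(i,u,v)$ via the cardinality thresholds that define $T$ and $E$.

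To pass from a partial-sum lower bound (up to index $i$) of $2\tau^{i+1}$ to a full-sum lower bound of $\tau^{i+1}/2$, the whole argument reduces to controlling the tail $\sum_{j=i+1}^{\infty} |f_j^k(u)-f_j^k(v)|$. The key input is~\eqref{eq:upper min}, which gives the termwise estimate $|f_j^k(u)-f_j^k(v)|\le \tau^j$. Summing the geometric series yields the tail bound $\tau^{i+1}/(1-\tau)$.

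The last thing to check is that $\tau$ is small enough so that $\tau^{i+1}/(1-\tau)\le \tfrac{3}{2}\tau^{i+1}$, i.e.\ $\tau\le 1/3$. This follows immediately from the definition~\eqref{eq:def tau}, since $\e<1/2$ and $\kappa=\log K\ge \log 2$ force $\tau = \e^\theta/(32\kappa^\theta)$ to be well below $1/3$ (indeed, $\tau < 1/20$). Combining the partial-sum estimate for $k\in L(i,u,v)$ with the tail bound by the reverse triangle inequality gives
\[
\left|\sum_{j=1}^\infty f_j^k(u)-\sum_{j=1}^\infty f_j^k(v)\right| \ge 2\tau^{i+1} - \tfrac{3}{2}\tau^{i+1} = \tfrac{1}{2}\tau^{i+1},
\]
so $k\in G(i,u,v)$, as required.

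There is no real obstacle here: the lemma is a bookkeeping step whose role is to translate the probabilistically convenient event $T(i,u,v)$ (which involves only the finite sum up to scale $i$, hence depends only on finitely many of the random variables in~\eqref{eq:all variables}) into the geometrically meaningful event $E(i,u,v)$. This localization in $j$ is crucial for the Lov\'asz Local Lemma application that will follow, so the lemma is best thought of as preparing the ground for the limited-dependency structure needed in Lemma~\ref{lem:E positive}.
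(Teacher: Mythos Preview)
Your proof is correct and follows essentially the same approach as the paper: bound the tail $\sum_{j>i}|f_j^k(u)-f_j^k(v)|$ by $\sum_{j>i}\tau^j\le \tfrac{3}{2}\tau^{i+1}$ via~\eqref{eq:upper min}, then apply the reverse triangle inequality to deduce $L(i,u,v)\subseteq G(i,u,v)$. The only difference is cosmetic---you spell out why $\tau\le 1/3$, which the paper leaves implicit.
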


  \begin{proof}
  Using~\eqref{eq:upper min} we see that for all $k\in \{1,\ldots,N\}$,
  \begin{equation}\label{eq:3/2}
  \left|\sum_{j=i+1}^\infty f_j^k(u)-\sum_{j=i+1}^\infty f_j^k(v)\right|\le \sum_{j=i+1}^\infty \tau^j\le \frac32\tau^{i+1}.
  \end{equation}
  Hence, if $k\in L(i,u,v)$ then
  \begin{equation*}
  \left|\sum_{j=1}^\infty f_j^k(u)-\sum_{j=1}^\infty f_j^k(v)\right|\ge \left|\sum_{j=1}^i f_j^k(u)-\sum_{j=1}^i f_j^k(v)\right|-
  \left|\sum_{j=i+1}^\infty f_j^k(u)-\sum_{j=i+1}^\infty f_j^k(v)\right|
  \stackrel{\eqref{eq:def L}\wedge\eqref{eq:3/2}}{\ge} \frac{\tau^{i+1}}{2}.
  \end{equation*}
  This means that $L(i,u,v)\subseteq G(i,u,v)$, and hence $|L(i,u,v)|\ge \frac{N}{2}\implies |G(i,u,v)|\ge \frac{N}{2}$.
  \end{proof}

Before proceeding with the proof of Lemma~\ref{lem:E positive}, it is
beneficial for us to introduce some notation related to the random
partitions that are used in the definition of the embedding $F$. For
$i\in \N$ and $k\in \{1,\ldots,N\}$ the partition $P_i^k$ was
defined in~\eqref{eq:def P_i^k}, where $\{x_1^i,\ldots,x_{n_i}^i\}$
is a fixed $\frac14 \tau^{i/(1-\e)}$-net of $X$, and
$R^k_{i1},\ldots,R^k_{in_i}$ are i.i.d. random variables whose
density is $\phi_{s}$ as given in~\eqref{eq:density}, with
$s=\tau^{i/(1-\e)}$. For every $y\in X$ define
\begin{equation}\label{def:J_y}
J(i,y)\eqdef \left\{j\in \{1,\ldots,n_i\}:\ d\left(y,x_j^i\right)\le
2\tau^{\frac{i}{1-\e}}\right\}.
\end{equation}
We will consider the following random variable
\begin{equation}\label{eq:def j(i,y)}
j(i,k,y)\eqdef \min\left\{j\in J(i,y): y\in B\left(x^i_j,R^k_{ij}\right)\right\}.
\end{equation}
To see that $j(i,k,y)$ is well-defined, note that since $\{x_1^i,\ldots,x_{n_i}^i\}$ is a  $\frac14 \tau^{i/(1-\e)}$-net of $X$ and $R^k_{i1},\ldots,R^k_{in_i}\ge \frac14 \tau^{i/(1-\e)}$, there must be some $j\in \{1,\ldots,N\}$ for which $y\in B\left(x^i_j,R^k_{ij}\right)$, and since $R^k_{ij}\le \frac12 \tau^{i/(1-\e)}$ necessarily $j\in J(i,y)$.

>From the definition~\eqref{eq:def part} we see that
\begin{equation}\label{eq:compute part}
P_i^k(y)= B\left(x^i_{j(i,k,y)},R^k_{ij(i,k,y)}\right)\setminus \bigcup_{\ell=1}^{j(i,k,y)-1} B\left(x^i_\ell,R^k_{i\ell}\right).
\end{equation}
But note that if there exists $z\in
B\left(x^i_\ell,R^k_{i\ell}\right)\cap
B\left(x^i_{j(i,k,y)},R^k_{ij(i,k,y)}\right)$  then
$$
d\left(x^i_\ell,y\right)\le
d\left(x^i_\ell,z\right)+d\left(z,x^i_{j(i,k,y)}\right)+
d\left(x^i_{j(i,k,y)},y\right)\le  R^k_{i\ell}+ 2R^k_{ij(i,k,y)}\le
2\tau^{\frac{i}{1-\e}},
$$
implying that $\ell\in J(i,y)$ . It follows from this that~\eqref{eq:compute part} can be rewritten as follows:
\begin{equation}\label{eq:compute part 2}
P_i^k(y)= B\left(x^i_{j(i,k,y)},R^k_{ij(i,k,y)}\right)\setminus \bigcup_{\ell\in J(i,y)\cap \{1,\ldots,j(i,k,y)-1\}} B\left(x^i_\ell,R^k_{i\ell}\right).
\end{equation}

  To continue with our plan to use Lemma~\ref{lem:local}, we  define a graph $H=(V,E_H)$, where
  $V\eqdef \{T(i,u,v):\ (i,u,v)\in M\}$, and a mapping $\rho:V\to \N$, as follows.
  \begin{equation}\label{eq:def edges}
  \left\{T(i,u,v),T(i',u',v')\right\}\in E_H\iff i=i'\ \wedge\ d\left(\{u,v\},\{u',v'\}\right)\le 4\tau^{\frac{i}{1-\e}},
  \end{equation}
  \begin{equation}\label{eq:def rho}
  \rho(T(i,u,v))=i.
  \end{equation}

  \begin{lemma}\label{lem:degree bound}
  The maximal degree of $H$ is at most $K^{c^{**}(\log\log K+\log(1/\e))}$,
  where $c^{**}\in (0,\infty)$ is a universal constant.
  \end{lemma}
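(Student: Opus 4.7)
The plan is to fix a vertex $T(i,u,v) \in V$ and count how many triples $(i',u',v') \in M$ satisfy $\{T(i,u,v),T(i',u',v')\} \in E_H$. By~\eqref{eq:def edges} any such neighbor has $i'=i$, so it suffices to count pairs $(u',v')\in\mathscr{N}_i\times\mathscr{N}_i$ with $d(\{u,v\},\{u',v'\})\le 4\tau^{i/(1-\e)}$; the additional constraint $(i,u',v')\in M$ only reduces this count. First I would use the triangle inequality to show that both $u'$ and $v'$ must lie in the ball $B(u,R)$ with $R \eqdef 10\tau^{(i-1)/(1-\e)}$. Indeed, some $a\in\{u,v\}$ is within $4\tau^{i/(1-\e)}$ of some $a'\in\{u',v'\}$, so $d(u,u')\le d(u,a)+d(a,a')+d(a',u')\le 3\tau^{(i-1)/(1-\e)}+4\tau^{i/(1-\e)}+3\tau^{(i-1)/(1-\e)}\le 10\tau^{(i-1)/(1-\e)}$, where we used $\tau<1$; the same bound applies to $d(u,v')$. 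Consequently the degree of $T(i,u,v)$ is at most $|\mathscr{N}_i\cap B(u,R)|^2$.

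Next I would bound $|\mathscr{N}_i\cap B(u,R)|$ via the doubling property. Iterating the doubling condition $t\eqdef\lceil\log_2(2R/\delta_i)\rceil$ times covers $B(u,R)$ by at most $K^t$ balls of radius $\delta_i/2$, and since $\mathscr{N}_i$ is a $\delta_i$-separated set each such ball contains at most one of its points. Hence $|\mathscr{N}_i\cap B(u,R)|\le K^{\log_2(R/\delta_i)+O(1)}$. The key calculation is to estimate $\log_2(R/\delta_i)$. Substituting $\delta_i = \tau^{(i+2)/(1-\e)}\bigl(4\e/(c^*\kappa)\bigr)^{1/(1-\e)}$ from~\eqref{eq:def delta i} gives $R/\delta_i \lesssim \tau^{-3/(1-\e)}(\kappa/\e)^{1/(1-\e)}$, and then inserting $\tau=\e^\theta/(32\kappa^\theta)$ from~\eqref{eq:def tau} yields
\[
\frac{R}{\delta_i}\lesssim \left(\frac{\kappa}{\e}\right)^{(3\theta+1)/(1-\e)}.
\]
Since $\theta\le 1$ and $\e<1/2$, the exponent $(3\theta+1)/(1-\e)$ is bounded by an absolute constant, so $\log_2(R/\delta_i) \lesssim \log\kappa + \log(1/\e) = \log\log K + \log(1/\e)$. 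Squaring the resulting bound on $|\mathscr{N}_i\cap B(u,R)|$ produces the asserted degree bound $K^{c^{**}(\log\log K+\log(1/\e))}$.

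The main obstacle I anticipate is the $\e$-bookkeeping in the exponent of $K$: both $\delta_i$ and $\tau$ contain inverse-polynomial factors in $\e$, and some of them are raised to the power $1/(1-\e)$, so a sloppy estimate could easily produce a $\poly(1/\e)$ factor in the exponent of $K$ rather than the required $\log(1/\e)$. The reason a logarithmic bound survives is that all of the $\e$- and $\kappa$-dependence in $R/\delta_i$ is a power of $\kappa/\e$ whose exponent stays uniformly bounded (using $\e<1/2$ and $\theta\le 1$); taking a single $\log_2$ therefore collapses everything to $\log(\kappa/\e) = \log\log K + \log(1/\e)$. Beyond this estimate, the rest of the argument is bookkeeping about which vertex pairs of $M$ can be geometrically close, and is routine once the ball-containment step above is in place.
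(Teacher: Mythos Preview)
Your proposal is correct and follows essentially the same argument as the paper: bound the neighbors of $T(i,u,v)$ by pairs from $\mathscr N_i$ lying in a ball of radius $\asymp\tau^{(i-1)/(1-\e)}$ around $u$, and then use the doubling property together with the $\delta_i$-separation of $\mathscr N_i$ to count net points in that ball. Your treatment of the ball-containment step (taking $R=10\tau^{(i-1)/(1-\e)}$ via a full triangle-inequality argument rather than a WLOG) is in fact slightly more careful than the paper's, and your explicit computation of $\log_2(R/\delta_i)\lesssim \log\log K+\log(1/\e)$ is exactly the estimate the paper records as $K^{O(\log\log K+\log(1/\e))}$.
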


  \begin{proof}
  Given $(i,u,v)\in M$, we need to bound the number of $(i,u',v')\in M$ satisfying $d\left(\{u,v\},\{u',v'\}\right)\le 4\tau^{i/(1-\e)}$. We may assume that $d\left(u,u'\right)\le 4\tau^{i/(1-\e)}$. Recall that from the definition of M in~\eqref{eq:def M} we know that $d(u,v),d(u',v')\le 3\tau^{(i-1)/(1-\e)}$. Hence the points $v,u',v'$ are all in the ball $B$ of radius $r= 4\tau^{(i-1)/(1-\e)}$ centered at $u$, implying that the number of $(i,u',v')$ as above is at most $|B\cap \mathscr N_i|^2$. Since $(X,d)$ is $K$-doubling, $B$ can be covered by at most $K^{1+\log_2(2r/\delta_i)}$ balls of radius $\delta_i/2$, each of which contains at most one point from the $\delta_i$-net $\mathscr N_i$ (recall~\eqref{eq:def delta i} for the definition of $\delta_i$). Hence, the maximal degree of $H$ is at most
  \begin{equation*}
  K^{4+2\log_2(r/\delta_i)}=K^{O(\log\log K+\log(1/\e))}.\qedhere
  \end{equation*}
  \end{proof}

\begin{lemma}\label{lem:condisional bound}
For every $(i,u,v)\in M$ and for every \begin{equation}\label{eq:our
Q} Q\subseteq \Big\{(i',u',v')\in M:\ i\ge i'\ \wedge\
\left\{T(i,u,v),T(i',u',v')\right\}\notin E_H\Big\},
\end{equation}
we have
\begin{equation}\label{eq:for lovasz}
\Pr\left[\bigcap_{(i',u',v')\in Q} T(i',v',u')\setminus
T(i,u,v)\right]\le \left(\frac{\e}{\log K}\right)^{\theta
N/2}\Pr\left[\bigcap_{(i',u',v')\in Q} T(i',v',u')\right].
\end{equation}
\end{lemma}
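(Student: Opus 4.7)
My plan is to condition on all the randomness in~\eqref{eq:all variables} except for the $2N$ uniform variables $U_i^k(P_i^k(u))$ and $U_i^k(P_i^k(v))$ ($k=1,\ldots,N$), and to exploit the independence of these remaining uniforms across coordinates. Let $\mathcal{F}$ be the $\sigma$-algebra generated by all of~\eqref{eq:all variables} apart from these $2N$ variables. By the locality expression~\eqref{eq:compute part 2}, $P_i^k(y)$ is determined by $\{R_{i\ell}^k:\ell\in J(i,y)\}$; moreover $J(i,u)\cap J(i,u')=\emptyset$ whenever $d(u,u')>4\tau^{i/(1-\e)}$, and since cells at scale $i$ have diameter at most $\tau^{i/(1-\e)}$, the ``no edge'' separation in $Q$ forces $P_i^k(u')\neq P_i^k(u),P_i^k(v)$ (and similarly for $v'$) whenever $i'=i$. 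For $(i',u',v')\in Q$ with $i'<i$ only scales strictly below $i$ are involved. In either case $T(i',u',v')$ involves only $U$-variables distinct from the $2N$ excluded ones, so $\mathcal{G}\eqdef\bigcap_{(i',u',v')\in Q} T(i',u',v')$ is $\mathcal{F}$-measurable.

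For each $k$ I introduce the favorable event $\mathcal{P}_k\eqdef\{B(u,\beta s)\subseteq P_i^k(u)\}\cap\{B(v,\beta s)\subseteq P_i^k(v)\}$, with $s=\tau^{i/(1-\e)}$ and $\beta=\tau/(64\kappa)<1/40$. Lemma~\ref{lem:pad} together with a union bound gives $\Pr[\mathcal{P}_k^c]\le 2(1-K^{-64\beta})\le 128\beta\kappa=2\tau$. The event $\mathcal{P}_k$ depends only on $\{R_{i\ell}^k:\ell\in J(i,u)\cup J(i,v)\}$, so the $\mathcal{P}_k$ are mutually independent across $k$ and also independent of $\mathcal{G}$. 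On $\mathcal{P}_k$ the cap $\tau^i$ in~\eqref{def:basic f} is attained at both $u$ and $v$, yielding $f_i^k(u)-f_i^k(v)=(U_i^k(P_i^k(u))-U_i^k(P_i^k(v)))\tau^i$; and since $d(u,v)>s$ exceeds the diameter of any scale-$i$ cell, $P_i^k(u)\neq P_i^k(v)$, so these two uniforms are genuinely independent. Let $Z_k\eqdef\1_{k\notin L(i,u,v)}$ and $S_{<i}^k\eqdef\sum_{j=1}^{i-1}(f_j^k(u)-f_j^k(v))$, which is $\mathcal{F}$-measurable. On $\mathcal{P}_k$, $Z_k=\1\{|(U-U')\tau^i+S_{<i}^k|<2\tau^{i+1}\}$ with $U,U'$ i.i.d.\ uniform on $[0,1]$; the density of $U-U'$ on $[-1,1]$ is bounded by $1$, so $\Pr[Z_k=1\mid\mathcal{F},\mathcal{P}_k]\le 4\tau$.

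To assemble the Lov\'asz estimate I stochastically dominate: take $\{W_k\}$ i.i.d.\ Bernoulli$(4\tau)$ independent of everything, and set $\tilde Z_k\eqdef\1_{\mathcal{P}_k^c}\vee W_k$. Then $\Pr[Z_k=1\mid\mathcal{F}]\le \Pr[\tilde Z_k=1\mid\mathcal{F}]$ holds both on $\mathcal{P}_k$ (where both sides are $\le 4\tau$) and on $\mathcal{P}_k^c$ (where the right side equals $1$). Since the $Z_k$ are conditionally independent given $\mathcal{F}$ and the $\tilde Z_k$ are jointly independent Bernoullis with parameter at most $2\tau+4\tau=6\tau$ and are independent of $\mathcal{G}$, a standard coupling and the union bound over subsets of size $\lceil N/2\rceil$ yield
\[
\Pr[T(i,u,v)^c\mid\mathcal{G}]\le \Pr\bigl[\textstyle\sum_k\tilde Z_k\ge N/2\bigr]\le\binom{N}{\lceil N/2\rceil}(6\tau)^{N/2}\le (24\tau)^{N/2}.
\]
Substituting $\tau=\e^\theta/(32\kappa^\theta)$ gives $(24\tau)^{N/2}=(3/4)^{N/2}(\e/\kappa)^{\theta N/2}\le(\e/\log K)^{\theta N/2}$, which rearranges into~\eqref{eq:for lovasz}.

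The main obstacle will be the first paragraph: rigorously verifying that $\mathcal{G}$ is $\mathcal{F}$-measurable amounts to tracking exactly which random variables in~\eqref{eq:all variables} each event $T(i',u',v')$ depends on, and systematically using the geometric ``no edge'' separation together with the locality~\eqref{eq:compute part 2} to exclude any appearance of the $2N$ reserved $U$-variables. Once this bookkeeping is done, the per-coordinate density estimate and the binomial tail bound are routine.
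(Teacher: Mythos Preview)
Your proof is correct and follows essentially the same strategy as the paper's: isolate the randomness on which $W=\bigcap_{Q}T(i',u',v')$ depends via the locality identity~\eqref{eq:compute part 2} and the no-edge separation, use the padding event with $\beta=\tau/(64\kappa)$ so that the cap in~\eqref{def:basic f} is attained and a fresh uniform controls the $i$th term, and finish with independence across coordinates plus a binomial tail bound. The only difference is cosmetic: the paper reserves randomness and invokes padding only at $u$ (so a single uniform $U_i^k(P_i^k(u))$ suffices, with $f_i^k(v)$ absorbed into the $\mathcal F$-independent shift $a$), whereas you pad at both $u$ and $v$ and use the density of $U-U'$; both routes yield the same $(\mathrm{const}\cdot\tau)^{N/2}$ bound.
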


\begin{proof} Denote
$$
W\eqdef \bigcap_{(i',u',v')\in Q} T(i',v',u').
$$
Consider the following subsets $\mathscr X, \mathscr Y$ of the
random variables given in~\eqref{eq:all variables}:
\begin{multline*}
\mathscr{X}\eqdef \Big\{R_{i'j}^k:\ i'\in \{1,\ldots,i-1\},\
j\in\{1,\ldots,n_{i'}\},\ k\in \{1,\ldots,N\}\Big\}\\\bigcup
\Big\{U_{i'}^k(C):\ i'\in\{1,\ldots,i-1\},\ k\in \{1,\ldots,N\},\
C\subseteq X\Big\},
\end{multline*}
\begin{multline*}
\mathscr{Y}\eqdef \Big\{R_{ij}^k:\ j\in \{1,\ldots,n_i\}\setminus
J(i,u),\ k\in \{1,\ldots,N\}\Big\}\\\bigcup \Big\{U_{i}^k(C):\ k\in
\{1,\ldots,N\},\
 C\subseteq X\setminus B\left(u,2\tau^{i/(1-\e)}\right)\Big\}.
\end{multline*}

The event $W$ depends only on the variables
$\mathscr{X}\cup\mathscr{Y}$. Indeed, for $(i',u',v')\in Q$ with
$i'<i$  it follows from the definitions~\eqref{eq:def
T},\eqref{eq:def L},\eqref{def:basic f} that the event $T(i',u',v')$
depends only on the variables $\mathscr{X}$. If $(i,u',v')\in Q$ and
$\left\{T(i,u,v),T(i,u',v')\right\}\notin E_H$ then it follows
from~\eqref{eq:def edges} that $d(u',u),d(v',u)>4\tau^{i/(1-\e)}$.
Since the diameter of $P_i^k(u'),P_i^k(v')$ is at most
$\tau^{i/(1-\e)}$, it follows that $P_i^k(u'),P_i^k(v')\subseteq
X\setminus B\left(u,2\tau^{i/(1-\e)}\right)$, and hence
$U_i^k(P_i^k(u')),U_i^k(P_i^k(v'))\in \mathscr{Y}$. Similarly, we
know that $J(i,u)\cap J(i,u')=\emptyset$ and $J(i,u)\cap
J(i,v')=\emptyset$, and hence from the identity~\eqref{eq:compute
part 2} we know that $P_i^k(u'),P_i^k(v')$ depend only on the
variables $\mathscr{Y}$. These observations, combined with the
definition~\eqref{def:basic f}, imply that $f_i^k(u'),f_i^k(v')$
depend only on the variables $\mathscr{Y}$, and from the
definitions~\eqref{eq:def T},\eqref{eq:def L} we conclude that the
event $T(i,u',v')$ depends only on the variables
$\mathscr{X}\cup\mathscr{Y}$, as required.

Recalling the definitions~\eqref{eq:def L}, \eqref{eq:def S},
\eqref{eq:def T}, it follows from the above argument that
\begin{equation}\label{eq:conditioned}
\Pr\left[W\cap T(i,u,v)\right]=\int_W \Pr\left[\left.\sum_{k=1}^N
\1_{S(i,u,v,k)}\ge
\frac{N}{2}\right|\mathscr{X}\cup\mathscr{Y}\right]d\Pr.
\end{equation}
To estimate the right hand side of~\eqref{eq:conditioned}, for each
$k\in \{1,\ldots,n\}$ consider the event
\begin{equation}\label{eq:def Zk}
Z_k\eqdef S(i,u,v,k)\cap \left\{B\left(u,
\frac{\tau}{64\kappa}\tau^{\frac{i}{1-\e}}\right)\subseteq
P_i^k(u)\right\}.
\end{equation}
From~\eqref{eq:conditioned} we then see that
\begin{equation}\label{eq:conditioned lower bound}
\Pr\left[W\cap T(i,u,v)\right]\ge\int_W \Pr\left[\left.\sum_{k=1}^N
\1_{Z_k}\ge \frac{N}{2}\right|\mathscr{X}\cup\mathscr{Y}\right]d\Pr.
\end{equation}

An application of Lemma~\ref{lem:pad} with
$\beta=\frac{\tau}{64\kappa}$ yields the estimate
\begin{equation}\label{eq:use pad lemma}
\Pr\left[B\left(u,
\frac{\tau}{64\kappa}\tau^{\frac{i}{1-\e}}\right)\subseteq
P_i^k(u)\right]\ge K^{-\tau/\kappa}=e^{-\tau}.
\end{equation}
Moreover, it follows from the definition~\eqref{def:basic f} that if
$B\left(u,
\frac{\tau}{64\kappa}\tau^{\frac{i}{1-\e}}\right)\subseteq P_i^k(u)$
then we have $f_i^k(u)=U_i^k\left(P_i^k(u)\right)\tau^i$. Hence,
recalling the definition~\eqref{eq:def S}, we see that
\begin{multline}\label{eq:determine f}
Z_k=\left\{B\left(u,
\frac{\tau}{64\kappa}\tau^{\frac{i}{1-\e}}\right)\subseteq
P_i^k(u)\right\}\\\bigcap\left\{\left|\sum_{j=1}^{i-1}\left(f_j^k(u)-f_j^k(v)\right)+
U_i^k\left(P_i^k(u)\right)\tau^i-f_i^k(v)\right|\ge
2\tau^{i+1}\right\}.
\end{multline}
>From the identity~\eqref{eq:compute part 2} we see that the event
$\left\{B\left(u,
\frac{\tau}{64\kappa}\tau^{\frac{i}{1-\e}}\right)\subseteq
P_i^k(u)\right\}$ is independent of $\mathscr{X}\cup\mathscr{Y}$.
Thus, denoting $a\eqdef
\tau^{-i}\sum_{j=1}^{i-1}\left(f_j^k(u)-f_j^k(v)\right)-\tau^{-i}f_i^k(v)$,
we have
\begin{eqnarray}
p&\eqdef&\label{eq:def p}\Pr\left[Z_k
\Big|\mathscr{X}\cup\mathscr{Y}
\right]\\\nonumber&\stackrel{\eqref{eq:determine
f}}{=}&\Pr\left[B\left(u,
\frac{\tau}{64\kappa}\tau^{\frac{i}{1-\e}}\right)\subseteq
P_i^k(u)\right]\cdot\Pr\left[ U_i^k\left(P_i^k(u)\right)\notin
\left(a-2\tau, a+2\tau\right)\Big| \mathscr{X}\cup\mathscr{Y}\right]
\\&\stackrel{\eqref{eq:use pad lemma}}{\ge}&
e^{-\tau}(1-4\tau)\label{eq:uniform}\\&\ge& 1-5\tau\label{eq:5tau},
\end{eqnarray}
where in~\eqref{eq:uniform} we used the fact that since $d(u,v)>
\tau^{i/(1-\e)}$ (by the definition~\eqref{eq:def M} of $M$), and
$\diam (P_i^k(u))\le \tau^{i/(1-\e)}$, we have $P_i^k(u)\neq
P_i^k(v)$, and therefore the random variable $U_i^k(P_i^k(u))$,
which is uniformly distributed on $[0,1]$, is independent of $a$ and
$\mathscr{X}\cup\mathscr{Y}$.

Since after fixing the values of $\mathscr{X}\cup\mathscr{Y}$, the
events $Z_1,\ldots,Z_N$  are independent, the Chernoff bound
(see~\cite[Thm.~A.1.12]{AS08}) implies that
\begin{multline}\label{eq:chernoff}
\Pr\left[\left.\sum_{k=1}^N \1_{Z_k}\ge
\frac{N}{2}\right|\mathscr{X}\cup\mathscr{Y}\right]=1-\Pr\left[\left.\sum_{k=1}^N
\1_{\Omega\setminus Z_k}>
\frac{N}{2}\right|\mathscr{X}\cup\mathscr{Y}\right]\\\stackrel{\eqref{eq:def
p}}{\ge} 1-
\left(e^{p-\frac12}\sqrt{2(1-p)}\right)^N\stackrel{\eqref{eq:5tau}}{\ge}
1-(10e\tau)^{N/2}.
\end{multline}
Substituting~\eqref{eq:chernoff} into~\eqref{eq:conditioned lower
bound} shows that
$$
\Pr\left[W\cap T(i,u,v)\right]\ge
\left(1-(30\tau)^{N/2}\right)\Pr[W]\stackrel{\eqref{eq:def
tau}}{\ge} \left(1-\left(\frac{\e}{\log K}\right)^{\theta
N/2}\right)\Pr[W],
$$
which is the same statement as~\eqref{eq:for lovasz}.
\end{proof}

\begin{proof}[Proof of Lemma~\ref{lem:E positive}]
By Lemma~\ref{lem:inclusion} we have
$$
\Pr[E]\ge \Pr\left[\bigcap_{(i,u,v)\in M} T(i,u,v)\right].
$$
Hence, due to Lemma~\ref{lem:degree bound} and
Lemma~\ref{lem:condisional bound}, Lemma~\ref{lem:E positive} will
follow from Lemma~\ref{lem:local} if the condition corresponding
to~\eqref{LLL condition} holds, i.e.,
$$
e\left(\frac{\e}{\log K}\right)^{\theta N/2}\left(K^{O(\log\log
K+\log(1/\e))}+1\right)\le 1.
$$
This holds true provided the constant $c$ in the
definition~\eqref{eq:def N} of $N$ is large enough.
\end{proof}

\section{Snowflakes of the Heisenberg group}\label{sec:heisenberg}

As promised in the introduction, we first argue that a positive
answer to the qualitative version of the Lang-Plaut question, as
appearing in Question~\ref{Q:LP}, implies its quantitative variant,
i.e., that for every $K>0$ there is $N=N(K)\in \N$ and $D=D(K)\in
(1,\infty)$ such that if $X\subseteq \ell_2$ has doubling constant
$K$ then $c_{\R^N}(X)\le D$. Indeed, if not then there would be some
$K>0$ and a sequence $\{X_n\}_{n=1}^\infty$  of subsets of $\ell_2$ with doubling constant $K$ and
satisfying $c_{\R^n}(X_n)> n$. By~\eqref{eq:reduce to finite
subsets} there are finite subsets $F_n\subseteq X_n$ with
$c_{\R^n}(F_n)> n$, and by translation and rescaling we may assume
that $0\in F_n$ and that $F_n$ is contained in the ball of $\ell_2$
centered at $0$ of radius 1. Let $Y\subseteq \ell_2\times \R$ be
given by $Y=\bigcup_{n=1}^\infty F_n\times \{4^n\}$. One checks that
$Y$ has doubling constant $O(K)$, and clearly all the $F_n$ embed
into $Y$ isometrically. By the assumed positive answer to the
Lang-Plaut problem it follows that $c_{\R^N}(F_n)\le D$ for some
$N\in \N$ and $D\in (1,\infty)$, a contradicting the fact that
$c_{\R^N}(F_n)>n$ for $n\ge N$.

%the subset consisting of all vectors of the form $\sum_{n=1}^\infty
%x_n$, where $x_n\in F_n$ for all $n\in \N$. Note that this sum
%converges since $\|x_n\|\le 3^{-n}$ for all $n\in \N$. Then $Y$ has
%doubling constant

\begin{proof}[Proof of Theorem~\ref{thm:heis}]
For $\theta>0$ define $\delta_\theta:\HH_n\to \HH_n$ by
$\delta_\theta(z,t)=(\theta z,\theta^2 t)$. Note that for every
measurable $A\subseteq \HH_n=\C^n\times \R$ we have
$\vol(\delta_\theta(A))=\theta^{2n+2}\vol(A)$. For $p\in [1,2)$ and
$(z,t)\in \HH_n$ define
$$
M_p(z,t)\eqdef
\sqrt[4]{|z|^4+t^2}\left(\cos\left(\frac{p}{2}\arccos\left(\frac{|z|^2}{\sqrt{|z|^4+t^2}}\right)\right)\right)^{1/p}.
$$

It was shown in~\cite{LN06} that $M_p(xy^{-1})\le M_p(x)+M_p(y)$ for
all $x,y\in \HH_n$. Therefore $d_{M_p}(x,y)=M_p(y^{-1}x)$ is a
left-invariant metric on $\HH_n$. It was also shown in~\cite{LN06}
that $\sqrt{1-\frac{p}{2}}N_0(x)\le M_p(x)\le N_0(x)$ for all $x\in
\HH_n$, and there exists $f:\HH_n\to \ell_2$ satisfying
$\|f(x)-f(y)\|=d_{M_p}(x,y)^{p/2}$ for all $x,y\in \HH_n$. Setting
$p=2(1-\e)$, we see that for all distinct $x,y\in \HH_n$ we have
$$
\frac{\|f(x)-f(y)\|}{{d_{N_0}(x,y)^{1-\e}}}=\left(\frac{d_{M_p}(x,y)}{d_{N_0}(x,y)}\right)^{p/2}\in
\left[\e^{(1-\e)/2},1\right]\subseteq [\sqrt{\e},1].
$$

For $x\in \HH_n$ and $r>0$ denote  $B_p(x,r)=\{y\in \HH_n:\
d_{M_p}(x,y)^{p/2}\le r\}$. Note that
$B_p(0,r)=\delta_{2^{2/p}}(B_p(0,r/2))$, since for every $\theta>0$
and $x,y\in \HH_n$ we have
$d_{M_p}\left(\delta_\theta(x),\delta_\theta(y)\right)=\theta
d_{M_p}(x,y)$. Hence, by left-invariance of $d_{M_p}^{p/2}$ and the
Lebesgue measure $\vol(\cdot)$, for all $x\in \HH_n$ and $r>0$ we
have $\vol (B_p(x,r))=2^{4(n+1)/p}\vol(B_p(x,r/2))$. This implies
that
$\left(\HH_n,d_{M_p}^{p/2}\right)=\left(\HH_n,d_{M_p}^{1-\e}\right)$,
and hence also its isometric copy $f(\HH_n)\subseteq \ell_2$, has
doubling constant $2^{8(n+1)/p}\le 2^{16(n+1)}$.
\end{proof}

We end with the proof of the distortion lower bound~\eqref{eq:lower
c_2 heis}. Assume that $f:\HH_1\to \ell_2$ satisfies
\begin{equation}\label{eq:assumption f}
d_{N_0}(x,y)^{1-\e}\le\|f(x)-f(y)\|\le Dd_{N_0}(x,y)^{1-\e}\quad
\forall \  x,y\in\HH_1.
 \end{equation}
Our goal is to prove that $D\gtrsim 1/\sqrt{\e}$. Denote $a=(1,0)\in
\HH_1$, $b=(i,0)\in \HH_1$ and $c=aba^{-1}b^{-1}=(0,-4)$. Writing
 $$
 B_m=\left\{(u+iv,t)\in \HH_1:\ u,v,t\in \Z\ \wedge \ N_0(u+iv,t)\le m\right\},
 $$
it follows from~\cite{ANT10} that there exists a universal constant $C>0$ such that for all $m\in \N$ we have
\begin{equation}\label{eq:ANT}
\sum_{x\in B_m}\sum_{k=1}^{m^2}\frac{\left\|f(xc^k)-f(x)\right\|^2}{k^2}\lesssim \sum_{x\in B_{Cm}}\left(\|f(xa)-f(x)\|^2+\|f(xb)-f(x)\|^2\right).
\end{equation}
Note that for all  $x\in \HH_1$ we have $d_{N_0}(xa,x)=N_0(a)=1$ and similarly $d_{N_0}(xb,x)=1$. Moreover, for all $k\in \N$ and $x\in \HH_1$ we have $d_{N_0}(xc^k,x)=N_0(c^k)=N_0(0,-4k)=2\sqrt{k}$. Hence, using~\eqref{eq:assumption f} and the fact that the cardinality of
$B_m$ is bounded above and below by universal
 multiples of $m^4$, inequality~\eqref{eq:ANT} becomes 
 $D^2\gtrsim \sum_{k=1}^{m^2}\frac{N_0(c^k)^{2(1-\e)}}{k^2}\gtrsim \sum_{k=1}^{m^2} \frac{1}{k^{1+\e}}$. Letting $m$ tend to $\infty$ we deduce that $D^2\gtrsim \sum_{k=1}^{\infty} \frac{1}{k^{1+\e}}\gtrsim \frac{1}{\e}$, as
 required.\qed

%\newpage

%\cite{LP01,LNM05,Kah81,Tal92,DT99,Ass83,ANT10,Sem96,Pan89,Sem99,Hei01,Hei03,Laa02,LN06,GKL03,
%HM06,GT08,CGT10,LNP09,GK11,BRS11,Che99,BP99,Seo10,KR85,LLR95,KN06,NPSS06,Luo96,KLMN05,HR10,JLM09,LS05,Sch38,Rao99,MS08}

  \bibliographystyle{abbrv}
  \bibliography{assouad}
\end{document}

  \begin{equation}\label{eq:edge assumption}
  d\left(u,u'\right)\le 4\left(\frac{\e}{32\log K}\right)^{\frac{i}{1-\e}},
  \end{equation}
  and recall that from the definition of M in~\eqref{eq:def M} we know that
  \begin{equation}\label{eq:M guarantee upper}
  d(u,v),d(u',v')\le 3\left(\frac{\e}{32\log K}\right)^{\frac{i-1}{1-\e}}.
  \end{equation}

  \begin{eqnarray}\label{eq:Holder}
\|F(x)-F(y)\|_2&\le& \nonumber\max_{k\in \{1,\ldots,N\}}\sum_{i=1}^{\infty}\left|f_i^k(x)-f_i^k(y)\right|\\\nonumber
 &\lesssim& \frac{(\log K)^{2(1-\e)}}{\e}\left(1+\frac{1}{\e\left(\log(1/\e)+\log\log K\right)}\right)d(x,y)^{1-\e}\\\nonumber
&\asymp& \left\{\begin{array}{ll}\frac{(\log K)^{2(1-\e)}}{\e}& \mathrm{if\ }\e\in \left(\frac{1}{\log\log K},\frac12\right]
\\\nonumber \frac{(\log K)^2}{\e^2\log\log K}& \mathrm{if\ }\e\in\left(\frac{1}{\log K},\frac{1}{\log\log K}\right]\\\nonumber
\frac{(\log K)^2}{\e^2\log(1/\e)}& \mathrm{if\ }\e\in \left(0,\frac{1}{\log K}\right]\end{array}\right\}\cdot d(x,y)^{1-\e} \\&\lesssim& \left(\frac{\kappa}{\e}\right)^{1+\theta} d(x,y)^{1-\e}.
\end{eqnarray}

\begin{multline}
A(i,j,k,y)\eqdef \left(\bigcap_{\ell\in J(i,y)\cap\{1,\ldots,j-1\}}\left\{B\left(x_\ell^i,R_{i\ell}^k\right)\cap B\left(y,\beta \tau^{\frac{i}{1-\e}}\right)=\emptyset\right\}\right)\\
\bigcap \left\{ B\left(x_j^i,R_{ij}^k\right)\cap B\left(y,\beta
\tau^{\frac{i}{1-\e}}\right)\notin\left\{\emptyset, B\left(y,\beta
\tau^{\frac{i}{1-\e}}\right)\right\}\right\}.
\end{multline}
\begin{equation}
Z(i,k,y)=\Omega\setminus \bigcup_{j\in J(i,y)} A(i,j,k,y).
%= \bigcap_{j\in J_j^i}\left(\Omega\setminus A(i,j,k,y)\right).
\end{equation}

\newpage

%As shown by Luukkainen-Tukia~\cite{LT81}, a compact Lipschitz
%$n$-manifold admits a bi-Lipschitz embedding into $\R^{2n+1}$ (see
%also~\cite{LV77}), and it was shown by Naor-Peres-Schramm-Sheffield

Luukkainen and Movahedi-Lankarani~\cite{LM94}

For example, it was shown by Gupta-Lee-Krauthgamer~\cite{GKL03} that
a doubling metric tree admits an embedding into some $\R^N$; other
(very different) proofs of this result were obtained by
Lee-Naor-Peres~\cite{LNP09} and Gupta-Talwar~\cite{GT08}.